\newtheorem{remarksimple}[theorem]{Remark}
\let\oldremarksimple\remarksimple
\renewcommand{\remarksimple}{\oldremarksimple\normalfont}
\newenvironment{remark}{\begin{remarksimple}}{\hfill$\diamond$\end{remarksimple}}
\newtheorem{examplesimple}[theorem]{Example}
\let\oldexamplesimple\examplesimple
\renewcommand{\examplesimple}{\oldexamplesimple\normalfont}
\newenvironment{example}{\begin{examplesimple}}{\hfill$\diamond$\end{examplesimple}}
\newcommand{\sign}{\mbox{sign}}
\newcommand{\mathC}{\mathbb{C}}
\newcommand{\mathR}{\mathbb{R}}
\newcommand{\spann}{\text{span}}
\newcommand{\spec}{\text{spec}}
\newcommand*{\Tra}{\mathsf{T}}
\DeclarePairedDelimiter{\opfences}{(}{)}
\newcommand*{\vect}{\operatorname{vec}\opfences}
\newcommand*{\real}{\operatorname{Re}\opfences}
\newcommand*{\imag}{\operatorname{Im}\opfences}
\newcommand{\af}[1]{#1}
\newcommand{\cut}[1]{}
\newcommand{\lmin}{\lambda_{\min}}
\newcommand{\lmax}{\lambda_{\max}}
\author{Andreas Frommer, Gustavo Ramirez-Hidalgo, Marcel Schweitzer, Manuel Tsolakis\thanks{Department of Mathematics, Bergische Universität Wuppertal, 42097 Wuppertal, Germany (\texttt{\{frommer,ramirezhidalgo,marcel,tsolakis\}@uni-wuppertal.de)}}
}
\title{Polynomial Preconditioning for the Action of the Matrix Square Root and Inverse Square Root}
\crefname{subsection}{section}{sections}
\Crefname{examplesimple}{example}{examples}
\Crefname{example}{Example}{Examples}
\begin{document}

\pagestyle{myheadings} \thispagestyle{plain}
\markboth{A.\ FROMMER, G.\ RAMIREZ-HIDALGO, M.\ SCHWEITZER, M.\ TSOLAKIS}{POLYNOMIAL PRECONDITIONING FOR THE MATRIX SQUARE ROOT}

\maketitle

\begin{abstract} While preconditioning is a long-standing concept to accelerate iterative methods for linear systems, generalizations to matrix functions are still in their infancy. We go a further step in this direction, introducing polynomial preconditioning for Krylov subspace methods which approximate the action of the matrix square root and inverse square root on a vector. Preconditioning reduces the subspace size and therefore avoids the storage problem together with---for non-Hermitian matrices---the increased computational cost per iteration that arises in the unpreconditioned case. Polynomial preconditioning is an attractive alternative to current restarting or sketching approaches since it is simpler and computationally more efficient. We demonstrate this for several numerical examples.
\end{abstract}

\begin{AMS}
    65F60, 
    65F08, 
    65F50, 
    15A16  
\end{AMS}

\section{Introduction} Even if the matrix $A \in \mathC^{n \times n}$ is sparse, its matrix function $f(A) \in \mathC^{n \times n}$ with $f: D\subseteq \mathC \to \mathC$ an appropriate function, is typically a full matrix. This is why for large $A$ one has to resort to computing the action $f(A)b$ for some vector $b$ rather than the full matrix $f(A)$---and in most applications, the action is typically all that is required.

Polynomial or rational Krylov subspace methods are then the methods of choice. Rational Krylov methods require the repeated solution of systems with coefficient matrices $A - \sigma I$ and various shifts $\sigma$. If no efficient solvers are available for such systems, polynomial methods are the only viable approach, and it is these methods that we consider in the present paper.  

Polynomial Krylov subspace methods use a nested orthonormal basis $v_1,\ldots, v_m$ of the Krylov subspaces $\mathcal{K}_m(A,b) = \spann\{b,Ab,\ldots,A^{m-1}b\}$, $m=1,2,\ldots$, from which they then typically extract the \emph{Arnoldi approximation} $f_m$ for $f(A)b$ as 
\[
f_m = V_m \, f(V_m^*AV_m)\, V_m^*b, \enspace \text{ where } V_m = [v_1| \cdots |v_m] \in \mathC^{n \times m}.
\] 
Herein, $V_m^*AV_m =: H_m \in \mathC^{m \times m}$ is an upper Hessenberg matrix, the entries of which arise in the orthogonalization process when determining the basis vectors $v_1,\ldots,v_m$, and $V_m^*b = e_1\|b\|$ with $e_1$ the first canonical unit vector in $\mathC^m$ and $\|\cdot\|$ the Euclidean vector norm.

In the non-Hermitian case, the orthogonalization costs of the Arnoldi process can become prohibitive if a large number of iterations is required. But even in the Hermitian case, there is no short recurrence formula to update $f_{m+1}$ from $f_m$ (except for the linear system case, i.e., $f(z) = z^{-1}$), which means that all Arnoldi vectors must be stored. 

For these reasons, approaches which aim at limiting the dimension $m$ of the Krylov subspace by using \emph{restarts} have attracted a lot of attention in the last two decades~\cite{AfanasjewEtAl2008a,EiermannErnst2006,EiermannErnstGuettel2011,FrommerGuettelSchweitzer2014b,FrommerGuettelSchweitzer2014a,IlicTurnerSimpson2010,TalEzer2007}; see also~\cite{GuettelKressnerLund2020,GuettelSchweitzer2021} (and the references therein) for an overview of limited memory methods for $f(A)b$ in general. The idea is to express the error $f(A)b-f_m$ as the action of a new matrix function $\widehat{f}(A)\widehat{b}$ on a new vector $\widehat{b}$ and recurse. Several representations for $\widehat{f}$ have been studied, and the challenge is to formulate one which is numerically stable. This is possible if $f$ is a Stieltjes transform~\cite{AlzerBerg2002}, a Cauchy type function as defined in~\cite{FrommerGuettelSchweitzer2014a} or a Laplace transform~\cite{FrommerKahlSchweitzerTsolakis2023}. In all these cases, $\widehat{f}$ is given as an integral, and sufficiently precise numerical quadrature has to be employed in order to evaluate $\widehat{f}$ and the corresponding functions occurring after more than one restart. 

Another approach that has recently emerged for improving the performance of Krylov methods for functions of nonsymmetric matrices is to use a \emph{truncated Arnoldi process}, thus mimicking the short recurrence available in the symmetric case. This yields local orthogonality of the basis vectors only, and the approach then combines this with \emph{randomized sketching}~\cite{woodruff2014sketching} which allows to subsequently perform a cheaper (implicit) orthogonalization of the Krylov basis, thus potentially reducing arithmetic operations, storage and communication~\cite{balabanov2019randomized,cortinovis2022speeding,GuettelSchweitzer2023,nakatsukasa2021fast}. These sketching methods can often be applied very successfully, in particular for entire functions like the exponential; see~\cite{PalittaSchweitzerSimoncini2023} for a theoretical justification. For other functions, sketched Krylov methods may fail completely though, as spurious Ritz values outside the field of values of $A$ can cause the methods to essentially break down when they hit a singularity or branch cut of $f$; see~\cite[Section~5.1.3]{cortinovis2022speeding} and~\cite[Section~5.4]{GuettelSchweitzer2023} for examples of this phenomenon.\footnote{We brief\/ly mention that another ``flavor'' of sketched Krylov methods exists, which reduces orthogonalization cost, too, by employing a sketched inner product, possibly together with mixed precision~\cite{balabanov2021randomizedblock,balabanov2022randomized,cortinovis2022speeding,timsit2023randomized}.  Such methods, however, cannot overcome the quadratic dependence of the cost on the iteration number.}

For the functions $f(z) = z^{-1/2}$ and $f(z) = z^{1/2}$ we here propose polynomial preconditioning as an alternative to keep the Krylov dimension $m$---and thus the orthogonalization cost---small. We extract an 
approximation to $f(A)b$ from a Krylov subspace 
$\mathcal{K}_m(Ap_{k-1}(A),c)$, where $p_{k-1}$ is an appropriate 
polynomial of degree $k-1$ and $c$ is a suitably chosen starting vector. While polynomial 
preconditioning as a technique for linear systems and 
eigenvalue computations has been used and investigated 
for a long time~\cite{ashby1988polynomial,dubois1979approximating,freund1990conjugate,joubert1994robust,lanczos1952chebyshev,liu2015polynomial,saad1985practical,saad1987least,thornquist2006fixed}, we believe that its extension to the 
inverse square root and the square root are novel, and our numerical experiments will show that polynomial preconditioning can substantially outperform restarted and sketched approaches. 

Since $\mathcal{K}_{m}
(Ap_{k-1}(A),b) \subseteq \mathcal{K}_{mk}(A,b)$, polynomial 
preconditioning extracts its approximation from a 
smaller subspace than the unpreconditioned method when investing the same 
number of matrix-vector multiplications. In this sense, 
the polynomially preconditioned approximation can be 
expected to be less accurate than the unpreconditioned 
Arnoldi approximation for the same investment of 
matrix-vector multiplications. This is actually a 
theorem for the linear system case and variational 
methods which minimize a measure for the error, like the 
CG or GMRES methods. The possible gains with polynomial 
preconditioning for matrix functions reside in the fact 
that (i) they allow to reduce storage (for Hermitian and non-Hermitian matrices), (ii) they reduce  arithmetic cost due to orthogonalization (for non-Hermitian matrices) which can become prohibitive if many iterations are needed, (iii) they reduce communication cost on parallel machines where orthogonalization requires global communication, (iv) they avoid restarts which are non-trivial to implement for matrix functions, and (v) they can exhibit the typical superlinear convergence behavior of Krylov subspace methods which is usually lost with restarts.

The square root and inverse square root of a matrix have many applications in a variety of scientific computing and engineering applications. For example, the square root and inverse square root of discretized differential operators arise when computing Dirichlet-to-Neumann and Neumann-to-Dirichlet maps~\cite{ArioliLoghin2009,DruskinKnizhnerman1999b}, while fractional powers of the graph Laplacian 
are used for modeling anomalous diffusion and other non-local phenomena in complex networks~\cite{benzi2020non,BenziSimunec2021,Estrada2021}. Applications in data science include sampling from Gaussian Markov Random fields~\cite{pettitt2002conditional}, and whitening to increase the fidelity of stochastic variational Gaussian processes~\cite{pleiss2020fast}, which require inverse square roots of precision and kernel matrices. Due to the relation $\sign(z) = z(z^2)^{-1/2}$,  the inverse square root is also often used in applications where the action of the matrix sign function is needed, e.g., when working with the overlap Dirac operator in lattice quantum chromodynamics~\cite{brannick2016multigrid,VanDenEshofFrommerLippertSchillingVanDerVorst2002}.

To conclude this introduction, let us mention that in the literature two other types of techniques have been proposed which can be regarded as preconditioning for matrix functions in a broader sense. The shift-invert (or RD-rational) Lanczos method~\cite{moret2004rd,eshof2006preconditioning} is a special case of more general rational Krylov subspace methods. It thus requires the solution of a shifted linear system in each iteration, a situation that we consider infeasible in the context of this paper.  
Additionally,~\cite{pleiss2020fast} proposes a preconditioning technique for the (inverse) matrix square root, which alters the resulting vector: The method does not return $f(A)b$, but only a vector which agrees with $f(A)b$ up to certain rotations. While such a vector is all that is needed for the specific application considered in~\cite{pleiss2020fast}, it is not possible to use this kind of approach when the vector $f(A)b$ itself is required (as is typically the case). 

This paper is organized as follows: In \cref{sec:prec} we shortly review left and right preconditioning for linear systems and then show how this can be extended to multiplicative matrix functions and polynomial preconditioners. In \cref{sec:inverse_square_root} we provide algorithmic details of preconditioned methods for the inverse square root and give a theoretical justification for why they can be expected to greatly improve convergence speed. We discuss the extension to the square root in \cref{sec:square_root}. Sensible choices for preconditioning polynomials and how to evaluate them at a matrix argument are considered in \cref{sec:polynomials}. We then report results for several numerical experiments in \cref{sec:numerics} before ending with our conclusions in~\cref{sec:conclusions}.

\section{Polynomial preconditioning} \label{sec:prec}

Preconditioning is a well-established technique for solving linear systems of equations, i.e., when $f(z) = z^{-1}$. For any nonsingular matrix $M$, we have that 
\begin{equation} \label{eq:linsys}
  A^{-1}b \, = \, (M^{-1}A)^{-1}M^{-1}b  \, = \, M^{-1}(AM^{-1})^{-1}b. 
\end{equation}
The first equality in~\eqref{eq:linsys} gives rise to left preconditioning, where we compute approximations $x_m$ to $A^{-1}b$ from the Krylov subspaces $\mathcal{K}_m(M^{-1}A,M^{-1}b)$, and the second equality to right preconditioning, where we obtain approximations $x_m = M^{-1}y_m$ with $y_m$ from the Krylov subspace $\mathcal{K}_m(AM^{-1},b)$. 
The challenge is to find a preconditioner such that $M^{-1}u$ is relatively easy to compute for any vector $u$, and at the same time the preconditioned matrix $AM^{-1}$ or $M^{-1}A$ is close enough to the identity such that a typical Krylov subspace method will take far fewer iterations to converge than the same method using just the matrix $A$.  

The inverse $f(z) = z^{-1}$ has the property that $(z_1z_2)^{-1} = z_1^{-1}z_2^{-1} = z_2^{-1}z_1^{-1}$, and as a matrix function, this translates into the very peculiar property that for any two nonsingular matrices $A$ and $B$ we have
\[
(AB)^{-1} = B^{-1}A^{-1}, \enspace (BA)^{-1} = A^{-1}B^{-1},
\]
which is at the origin of the equalities \eqref{eq:linsys} used for preconditioning.
The order of the factors matters unless $A$ and $B$ commute, in which case $f(A)g(B) = g(B)f(A)$ for any two functions $f$ and $g$ and thus in particular for $f(z) = g(z) = z^{-1}$; cf.~\cite{higham2008functions}.

If we want to transfer the idea of preconditioning to functions $f$ other than $z^{-1}$, one path to follow is to identify situations in which $f(AB)$ can easily be connected to $f(A)$ and/or $f(B)$. The following proposition gives such a result for $f$ a (possibly non-integer) power of a matrix $A$ and $B$ a polynomial in $A$. 

\begin{proposition}\label[proposition]{pro:poly_alpha}
Let $A \in \mathC^{n \times n}$, let $p$ be a polynomial and consider the function $z^\alpha$ for some $\alpha \in \mathR$. If $\alpha < 0$, further assume that the matrices $A$ and $p(A)$ do not have eigenvalues in $(-\infty,0]$. Then
\begin{equation} \label{eq:poly_alpha}
(Ap(A))^\alpha = A^{\alpha} (p(A))^\alpha = (p(A))^\alpha A^{\alpha},
\end{equation}
\end{proposition}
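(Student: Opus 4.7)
The plan is to exploit the fact that $A$ and $p(A)$ commute, since both are polynomials in $A$. From $Ap(A)=p(A)A$ the second equality $A^{\alpha}(p(A))^{\alpha}=(p(A))^{\alpha}A^{\alpha}$ follows from the standard fact that if two matrices commute then any (well-defined) matrix functions of them commute; here this applies because, under the eigenvalue hypothesis when $\alpha<0$, both $A^{\alpha}$ and $(p(A))^{\alpha}$ are themselves polynomials in $A$ and $p(A)$, respectively (via Hermite interpolation on the spectrum), and any two polynomials in commuting matrices commute.

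For the first equality I would use simultaneous triangularization. Let $A=UTU^{*}$ be a Schur decomposition; then $p(A)=Up(T)U^{*}$ with $p(T)$ upper triangular, and $Ap(A)=U\,Tp(T)\,U^{*}$ with $Tp(T)$ upper triangular, the diagonal entries being $\lambda_{i}$, $p(\lambda_{i})$ and $\lambda_{i}p(\lambda_{i})$, respectively, where $\lambda_{i}$ are the eigenvalues of $A$. By the unitary invariance of matrix functions, it suffices to prove $(Tp(T))^{\alpha}=T^{\alpha}(p(T))^{\alpha}$ for the upper triangular factors. Passing to the Jordan form of $T$ (which is simultaneously a Jordan form of $p(T)$ and $Tp(T)$ since these are polynomials in $T$), the problem reduces to a single Jordan block $J=\lambda I+N$, where $N$ is nilpotent, and the Hermite/Taylor expansion
\[
f(J)=\sum_{j=0}^{k-1}\frac{f^{(j)}(\lambda)}{j!}\,N^{j}
\]
reduces the claim to a scalar identity, namely that the formal power series in $N$ for $(tp(t))^{\alpha}$ and for $t^{\alpha}(p(t))^{\alpha}$ at $t=\lambda$ coincide. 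Since both sides are analytic functions of $t$ that agree in a neighborhood of $\lambda$ on the real axis (and hence everywhere they are defined), all derivatives match and the Jordan block identity follows.

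The main obstacle is therefore the scalar identity $(tp(t))^{\alpha}=t^{\alpha}(p(t))^{\alpha}$ and the branch-cut subtlety it hides: for non-integer $\alpha$ with the principal branch, the product of two complex numbers avoiding $(-\infty,0]$ need not itself avoid $(-\infty,0]$. For integer $\alpha$ there is nothing to check. For $\alpha<0$ the hypothesis that neither $A$ nor $p(A)$ has eigenvalues in $(-\infty,0]$ rules out singularities of $z^{\alpha}$ at $\lambda_{i}$ and at $p(\lambda_{i})$, and in the main cases of interest in this paper, where the spectra lie in the open right half-plane (in particular for Hermitian positive definite $A$ and suitable $p$), the arguments add in $(-\pi,\pi)$ and the scalar identity is immediate. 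One would phrase the proof accordingly, invoking analyticity of both sides in a joint domain containing the spectra of $A$ and $p(A)$ to avoid having to argue about branches case by case.
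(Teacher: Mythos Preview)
Your argument is essentially correct, but it takes a considerably more laborious route than the paper. The paper's proof is a two-line application of standard matrix-function identities from Higham's book: it invokes the \emph{composition rule} (if $f=g\circ h$ with everything well defined, then $f(A)=g(h(A))$) with $g(z)=z^{\alpha}$ and $h(z)=zp(z)$, together with the observation that as a scalar function $f(z)=(zp(z))^{\alpha}=z^{\alpha}(p(z))^{\alpha}$, so that $f(A)=A^{\alpha}(p(A))^{\alpha}$ by the product rule for functions of the same matrix; commutativity then follows from the fact that $A^{\alpha}$ and $(p(A))^{\alpha}$ are both functions of $A$. Your Schur/Jordan reduction and Taylor-on-a-block computation are in effect a re-derivation of exactly these two rules, so the two proofs are the same at the scalar level but differ in packaging: the paper cites the theorems, you unpack them. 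One small inaccuracy in your write-up: the Jordan form of $T$ is \emph{not} ``simultaneously a Jordan form of $p(T)$ and $Tp(T)$'' (if $p'(\lambda)=0$ the block structure of $p(T)$ can be finer); what is true, and sufficient for your purpose, is that passing to the Jordan basis of $T$ makes all three matrices block-diagonal with blocks that are polynomials in the individual Jordan blocks, so the problem still reduces to a single block.

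It is worth noting that you put your finger on a genuine subtlety that the paper's proof passes over: the scalar identity $(zp(z))^{\alpha}=z^{\alpha}(p(z))^{\alpha}$ for the principal branch can fail when the arguments of $z$ and $p(z)$ add to something outside $(-\pi,\pi]$, and the proposition's hypotheses do not exclude this. The paper simply asserts the scalar equality and moves on; your discussion of when it holds (integer $\alpha$, or spectra in the right half-plane) is the more honest treatment, and indeed the rest of the paper only uses the result in precisely those regimes.
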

\begin{proof}
By~\cite[Theorem~1.17]{higham2008functions}, we have that if $f(z)=g(h(z))$, then $f(A)=g(h(A))$, assuming that $g$ and $h$ are such that $h(A)$ and $g(h(A))$ are well defined. We apply this result with $g(z) \coloneqq z^\alpha $ and $h(z) \coloneqq zp(z)$, writing $f(z) = g(h(z)) = (z p(z))^\alpha = z^\alpha (p(z))^\alpha$, which immediately gives the first equality in~\eqref{eq:poly_alpha}. The second equality follows because $A^\alpha$ and $(p(A))^\alpha$ commute; see~\cite[Theorem~1.13(a), (e)]{higham2008functions}.
\end{proof}

For the functions considered in \Cref{pro:poly_alpha} we thus have, assuming that $p(A)$ and thus $(p(A))^\alpha$ is nonsingular, that 
\begin{equation} \label{eq:alpha_prec}
A^\alpha b = (Ap(A))^\alpha (p(A))^{-\alpha}b = (p(A))^{-\alpha} (Ap(A))^\alpha b.
\end{equation}
From this we can get, at least in principle, a left polynomially preconditioned method for $A^{\alpha}b$ by extracting iterates $f_m$ as the Arnoldi approximation from the Krylov subspace $\mathcal{K}_m(Ap(A),p(A)^{-\alpha}b)$, and a right polynomially preconditioned method by taking $f_m = (p(A))^{-\alpha}\widetilde{f}_m$ with $\widetilde{f}_m$ the Arnoldi approximations from the Krylov subspace  $\mathcal{K}_m(Ap(A),b)$. However, this becomes a computationally feasible approach only if the action of  $(p(A))^{-\alpha}$ on a vector is easy to compute. This is possible for special choices of $p$ and $\alpha = -1/2$, the inverse square root, and can be extended to the square root, $\alpha = 1/2$, as we will describe in the next sections.

\section{Polynomial preconditioning for the inverse square root} \label{sec:inverse_square_root}
In this section, we first describe algorithms for left and right polynomial preconditioning of the inverse square root. 
We then analyze the effect that preconditioning has on the spectrum (and thus the condition number) of Hermitian positive definite $A$.

In a polynomially preconditioned method for the inverse square root, we want $p(A)$ to approximate $A^{-1}$, so that $Ap(A)$ is close to the identity. At the same time, $(p(A))^{1/2}$ needs to be ``easy'' to evaluate. We thus take $p(z) = (q(z))^2$, where $q$ is chosen as a polynomial that approximates $z^{-1/2}$ to achieve both goals at the same time. The relation \eqref{eq:poly_alpha} becomes, for $\alpha = -1/2$,
\begin{equation} \label{eq:prec_identity}
A^{-1/2} b = (A(q(A))^2)^{-1/2}q(A)b = q(A)(A(q(A))^2)^{-1/2} b,
\end{equation}
provided we have 
\begin{equation} \label{eq:root_of_square}
\left((q(A))^2\right)^{1/2} = q(A).
\end{equation}
Whether \eqref{eq:root_of_square} holds or not depends on the branch that we take for the square root and on the distribution of the eigenvalues of $A$. We will always assume that we take the principal branch of the square root given as
\[
z = |z|e^{i \arg(z)} \mapsto \left||z|^{1/2}\right| e^{i \arg(z)/2}, \enspace \text{ for } \arg(z) \in (-\pi,\pi],
\]
i.e., we put the branch cut on the negative real axis. Then, for any polynomial $q$ we have
$\left((q(z))^2\right)^{1/2} = q(z)$ if and only if $\arg(q(z)) \in (-\pi/2,\pi/2]$, and thus
\begin{equation} \label{eq:root_of_square_A}
    \left((q(A))^2\right)^{1/2} = q(A) \enspace \text{ if } \spec(q(A)) \in \mathC^+,
\end{equation}
with $\mathC^+$ denoting the open right half-plane. We will discuss \eqref{eq:root_of_square_A} again in \cref{sec:polynomials} when we discuss how to obtain appropriate polynomials $q$. 

With $q(A)$ approximating $A^{-1/2}$, the matrix $A(q(A))^2$ should, in a loose sense, be closer to the identity than $A$ is and should thus have a small condition number. This is in turn an indication that we will require fewer Arnoldi iterations. To be specific, if, e.g., $A$ is Hermitian and positive definite, the analysis in~\cite[Theorem~4.3]{FrommerGuettelSchweitzer2014b} shows that the error of the $m$th Arnoldi approximation is bounded by an expression of the form $C \left( \left(\sqrt{\kappa}-1\right) / \left(\sqrt{\kappa} + 1\right) \right)^m$, where $\kappa$ is the condition number of the matrix. See \cref{subsec:conditioning} below for a more in-depth investigation of the condition number of the polynomially preconditioned matrix.

\subsection{Algorithmic aspects}\label{subsec:algorithms}

\begin{algorithm}
\begin{algorithmic}[1]
\State{choose polynomial $q$ such that $q(A)$ approximates $A^{-1/2}$} 
\State{put $c \leftarrow q(A)b$,\enspace $v_1 \leftarrow c/\|c\|$}
\For{$j=1,\ldots,m$} \Comment{Arnoldi process for preconditioned matrix}
\State{compute $u \leftarrow Av_j$, \enspace $y \leftarrow q(A)u, \enspace w \leftarrow q(A)y$} \label{line:polymult}
    \For{$i=1,\ldots,j$}  
    \State{$h_{ij} \leftarrow \langle w, v_i \rangle, \enspace w \leftarrow w - v_ih_{ij}$} \Comment{orthogonalize against previous vectors}
\EndFor
\State{$h_{j+1,j} \leftarrow \|w\|$}
\State{$v_{j+1} \leftarrow w/h_{j+1,j}$}
\EndFor 
\State{$f_m \leftarrow V_m(H_m^{-1/2}e_1\|c\|)$} \Comment{$V = [v_1 | \cdots |v_m]$, $H_m = (h_{ij}) \in \mathC^{m \times m}$ upper Hessenberg} \label{line:fm}
\end{algorithmic}
\caption{$m$ steps of left polynomially preconditioned Arnoldi  for  $A^{-1/2}b$ \label{alg:left_prec_Arnoldi}}
\end{algorithm}

\Cref{alg:left_prec_Arnoldi} describes the left polynomially preconditioned Arnoldi method to approximate $A^{-1/2}b$ in detail. The Arnoldi process produces the orthonormal basis $v_1,\ldots,v_m$ of $\mathcal{K}_m((q(A))^2A,q(A)b)$
obeying the Arnoldi relation
\[
(q(A))^2AV_m = V_{m}H_m + h_{m+1,m}v_{m+1}e_m^\Tra,
\]
with $e_m$ the $m$th canonical unit vector in $\mathC^m$. The vector $H_m^{-1/2}e_1$ for the small matrix $H_m$ is computed using an appropriate method for dense matrices, for example by computing $H_m^{1/2}$ via the blocked Schur algorithm~\cite{deadman2012blocked} and then solving a linear system. In line~\ref{line:polymult} we explicitly stress that we compute the preconditioned matrix-vector product in three stages as multiplications with $A$ and two times $q(A)$. This avoids numerically computing a representation of $p(z) = (q(z))^2$; see \cref{sec:polynomials} for further discussion.

\begin{algorithm}
\begin{algorithmic}[1]
\State{choose polynomial $q$ such that $q(A)$ approximates $A^{-1/2}$} 
\State{put $v_1 \leftarrow b/\|b\|$}
\For{$j=1,\ldots,m$} \Comment{Arnoldi process}
\State{compute $y_j \leftarrow q(A)v_j, \enspace u \leftarrow q(A)y_j, \enspace w \leftarrow Au$} \label{line:polymult_left}
    \For{$i=1,\ldots,j$}  
    \State{$h_{ij} \leftarrow \langle w, v_i \rangle, \enspace w = w - v_ih_{ij}$} \Comment{orthogonalize against previous vectors}
\EndFor
\State{$h_{j+1,j} \leftarrow \|w\|$}
\State{$v_{j+1} \leftarrow w/h_{j+1,j}$}
\EndFor 
\State{$f_m \leftarrow Y_m(H_m^{-1/2} e_1\|b\|)$} \Comment{$Y_m = [y_1 | \cdots |y_m]$, $H_m = (h_{ij}) \in \mathC^{m \times m}$ upper Hessenberg} 
\end{algorithmic}
\caption{$m$ steps of right polynomially preconditioned Arnoldi  for  $A^{-1/2}b$ \label{alg:right_prec_Arnoldi}}
\end{algorithm}

\Cref{alg:right_prec_Arnoldi} gives the details for right preconditioning. We changed the order of factors in line~\ref{line:polymult_left} which allows us to store the preconditioned vectors $y_j = q(A)v_j$ and use them when computing $f_m$. Storing the $y_j$ can be avoided, but then $f_m$ must be computed as $q(A)V_m(H_m^{-1/2}e_1\|b\|)$ which needs an additional (matrix-vector) multiplication with $q(A)$. 

With left preconditioning, the norm $\|f_m\|$ (and by extension, also the norms of the differences $\|f_m-f_{m+k}\|$, $k \geq 1$) can be obtained just from $H_m^{-1/2}e_1\|b\|$ (and $H_{m+k}^{-1/2}e_1\|b\|$), since $V_m$ is orthonormal. This is not the case with right preconditioning, where $Y_m$ does not have orthonormal columns. We conclude that when basing a stopping criterion on the size of the difference of consecutive iterates, left preconditioning is usually more appropriate.   

\subsection{Effect of preconditioning on the spectrum}\label{subsec:conditioning}
To investigate the effect of preconditioning on the spectrum of $A$, we restrict ourselves to Hermitian positive definite $A$, since this is conceptually simpler than more general settings. The preconditioning polynomial $q$ is constructed as an approximation to the inverse square root on the spectral interval of $A$, and clearly the quality of the preconditioner will depend on the quality of this polynomial approximation. As a measure of this quality, we will in the following assume that we have a bound of the form 
\begin{equation}\label{eq:approx_error}
\left|\frac{1}{\sqrt{z}} - q(z)\right| \leq \delta(z) \quad\text{ for } z \in [\lmin,\lmax]
\end{equation}
for the approximation error, where $\lmin$ and $\lmax$ denote the smallest and largest eigenvalue of $A$, respectively. For example, $\delta(z) \equiv \varepsilon$ corresponds to a uniform bound for the \emph{absolute} approximation error on the spectral interval, while $\delta(z) = \varepsilon/\sqrt{z}$ corresponds to a uniform bound for the \emph{relative} approximation error.

If we have a uniform relative error bound available, we can easily find an upper bound for the condition number of the preconditioned matrix $A(q(A))^2$.

\begin{proposition}\label[proposition]{pro:condition}
Let $A$ be Hermitian positive definite with smallest and largest eigenvalue $\lmin$ and $\lmax$, respectively. Further, assume that we have a bound~\eqref{eq:approx_error} with $\delta(z) = \varepsilon/\sqrt{z}$ available, where $\varepsilon < \sqrt{2}-1 \approx 0.4142$. Then
\[
\kappa_{\text{pre}} \leq \frac{1+2\varepsilon+\varepsilon^2}{1-2\varepsilon-\varepsilon^2},
\]
where $\kappa_{\text{pre}}$ denotes the condition number of $A(q(A))^2$.
\end{proposition}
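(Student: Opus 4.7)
My plan is to reduce the claim to a one-dimensional estimate for the scalar function $\lambda \mapsto \lambda\, q(\lambda)^2$ on the spectral interval $[\lmin,\lmax]$. Because $A$ is Hermitian, it is unitarily diagonalizable, and so is the polynomial matrix $A(q(A))^2$ (they share eigenvectors); its eigenvalues are precisely $\lambda_i\, q(\lambda_i)^2$ as $\lambda_i$ ranges over the eigenvalues of $A$. Thus $\kappa_{\text{pre}}$ equals the ratio of the largest to the smallest such value, and it suffices to sandwich $\lambda\, q(\lambda)^2$ between two positive constants, uniformly for $\lambda \in [\lmin,\lmax]$.

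To do this, I would introduce the pointwise approximation error $r(\lambda) := q(\lambda) - 1/\sqrt{\lambda}$, which by hypothesis satisfies $|r(\lambda)| \le \varepsilon/\sqrt{\lambda}$ on $[\lmin,\lmax]$. Squaring and multiplying by $\lambda$ gives the elementary identity
\begin{equation}
\lambda\, q(\lambda)^2 \;=\; \lambda\left(\tfrac{1}{\sqrt{\lambda}} + r(\lambda)\right)^{2} \;=\; 1 \;+\; 2\sqrt{\lambda}\, r(\lambda) \;+\; \lambda\, r(\lambda)^2 .
\end{equation}
The two error contributions admit the individual bounds $|\sqrt{\lambda}\, r(\lambda)| \le \varepsilon$ and $|\lambda\, r(\lambda)^2| \le \varepsilon^2$, which combine via the triangle inequality into $|\lambda\, q(\lambda)^2 - 1| \le 2\varepsilon + \varepsilon^2$. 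Equivalently,
\begin{equation}
1 - 2\varepsilon - \varepsilon^2 \;\le\; \lambda\, q(\lambda)^2 \;\le\; 1 + 2\varepsilon + \varepsilon^2
\end{equation}
for every $\lambda \in \spec(A) \subseteq [\lmin,\lmax]$.

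The role of the hypothesis $\varepsilon < \sqrt{2}-1$ is precisely to guarantee that the lower bound $1-2\varepsilon-\varepsilon^2$ is strictly positive (it vanishes exactly at $\varepsilon = \sqrt{2}-1$), which ensures that $A(q(A))^2$ is positive definite and that the quotient of extremal eigenvalues is meaningful. Dividing the upper bound by the lower bound then yields the claimed estimate for $\kappa_{\text{pre}}$. There is essentially no hidden difficulty: the only subtle bookkeeping point is this threshold on $\varepsilon$, which is exactly the boundary beyond which the symmetric two-sided expansion around $1$ can no longer certify positivity of every eigenvalue of the preconditioned matrix.
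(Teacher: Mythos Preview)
Your proof is correct and is essentially the same argument as the paper's: both bound $|1-\lambda\,q(\lambda)^2|$ by $2\varepsilon+\varepsilon^2$ on the spectral interval and then use $\varepsilon<\sqrt{2}-1$ to ensure positivity of the lower bound before taking the ratio. The only cosmetic difference is that you expand $\lambda(1/\sqrt{\lambda}+r)^2$ directly, whereas the paper factors $1/z-q(z)^2=(1/\sqrt{z}-q(z))(1/\sqrt{z}+q(z))$ and then applies the triangle inequality to the second factor---the resulting bound and the remaining logic are identical.
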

\begin{proof}
By a direct calculation, equation~\eqref{eq:approx_error} implies that
\begin{equation*}
\left|1-z(q(z))^2\right| = \left|z\left(\frac{1}{z}-(q(z))^2\right)\right| = z\left|\frac{1}{\sqrt{z}}-q(z)\right|\left|\frac{1}{\sqrt{z}}+q(z)\right|\leq 2\sqrt{z}\delta(z) +z(\delta(z))^2 
\end{equation*}
for all $z \in [\lmin,\lmax]$. Inserting $\delta(z) = \varepsilon/\sqrt{z}$ gives
\begin{equation}\label{eq:bound_epsilon}
\max_{z \in [\lmin,\lmax]} |1-z(q(z))^2| \leq 2 \varepsilon + \varepsilon^2.
\end{equation}
Due to our assumption that $\varepsilon < \sqrt{2}-1$, the right-hand side of~\eqref{eq:bound_epsilon} is smaller than one, so that $z(q(z))^2$ takes values in $[1-2\varepsilon-\varepsilon^2 , 1+2\varepsilon+\varepsilon^2]$ and the bound on the condition number directly follows.
\end{proof}
\begin{example}
\begin{figure}
\includegraphics[width=0.48\textwidth]{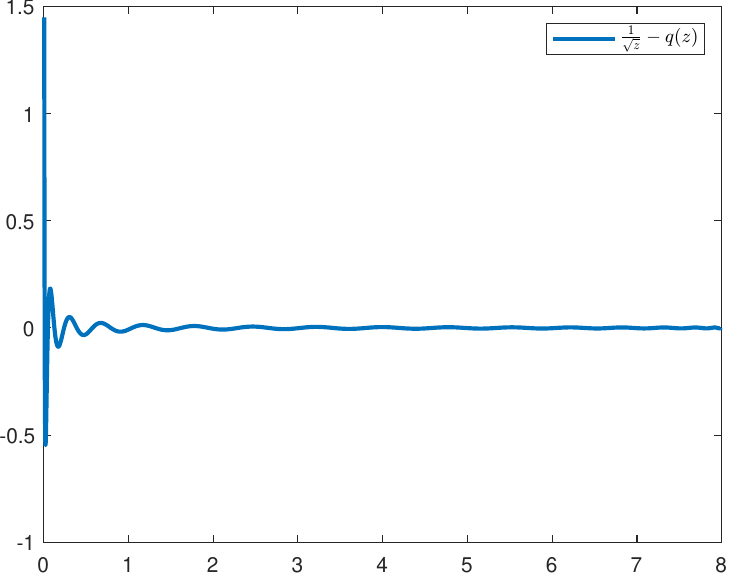}
\hfill
\includegraphics[width=0.48\textwidth]{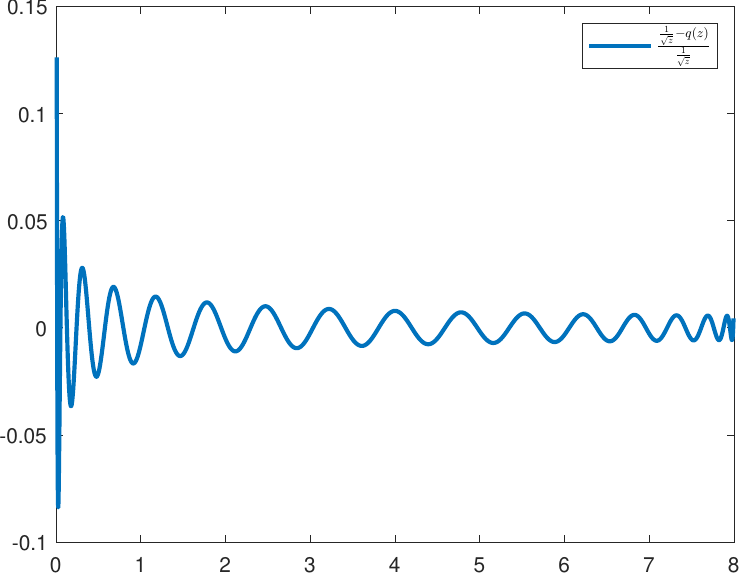}
\\[3ex]
\includegraphics[width=0.48\textwidth]{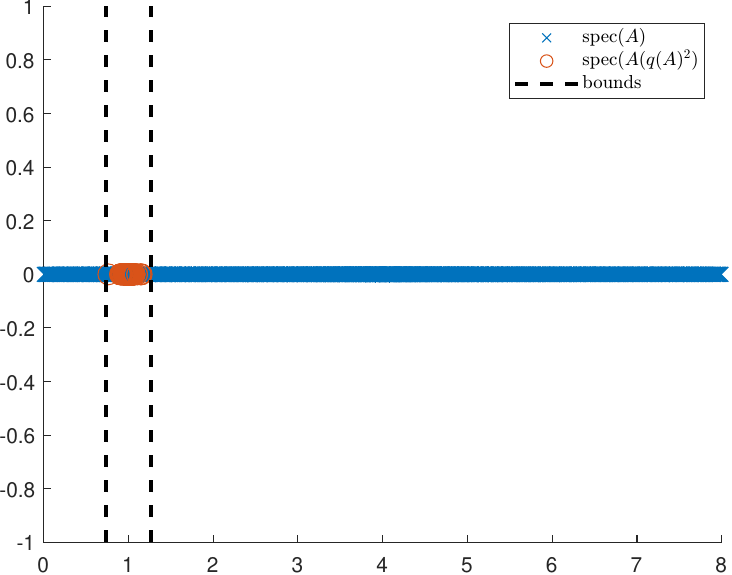}
\hfill
\includegraphics[width=0.48\textwidth]{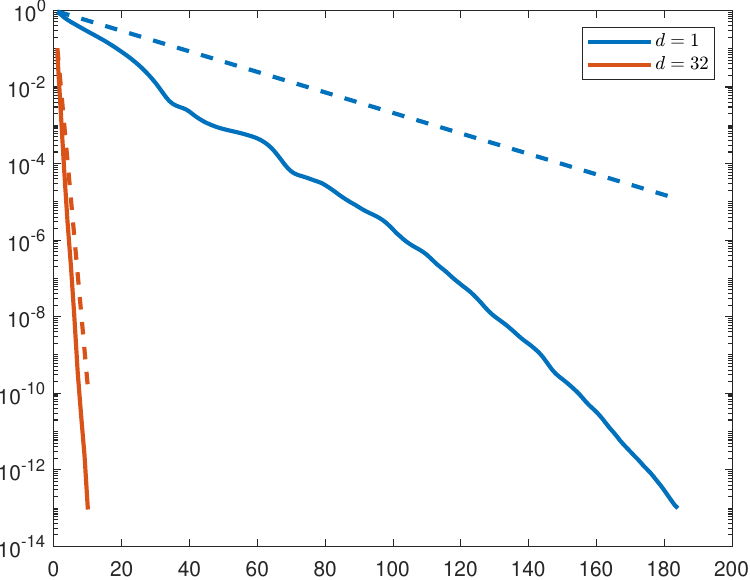}
    \caption{Illustration of effects of polynomial preconditioning for the discretized two-dimensional Laplace operator: Absolute/relative polynomial approximation error on the spectral interval (top left/right), 
    effect on spectrum (bottom left), convergence history and predicted slope (bottom right); see the text for details.}
    \label{fig:prec_illustration}
\end{figure}
We illustrate our theory with a small example. Let $A \in \mathbb{R}^{2500 \times 2500}$ be the discretization of the Laplace operator on a square with 50 interior grid points in each direction, with condition number $\kappa(A) \approx 1054$. We construct a Chebyshev preconditioning polynomial (see \cref{cheb:subseq}) with $d = 32$. 

The top left panel of \Cref{fig:prec_illustration} shows the absolute difference between $q(z)$ and the inverse square root, while the top right panel shows the relative difference. Both in an absolute and in a relative sense, the largest errors occur towards the left end of the spectrum. A uniform relative error bound for this specific example is
\[
\left|\frac{\frac{1}{\sqrt{z}}-q(z)}{\frac{1}{\sqrt{z}}}\right| \leq 0.1263 =: \varepsilon,
\]
so that according to \Cref{pro:condition}, the condition number of $A(q(A))^2$ satisfies $\kappa_{\text{pre}} \leq 1.7345$. In the bottom left panel of \Cref{fig:prec_illustration}, we depict the actual eigenvalues of $A$ and $A(q(A))^2$, together with the bounding interval $[1-2\varepsilon-\varepsilon^2 , 1+2\varepsilon+\varepsilon^2]$. The actual condition number of $A(q(A))^2$ is $1.5153$ and thus even a bit smaller than predicted by the bounds (and roughly a factor 700 smaller than the condition number of $A$). In the bottom right panel of \Cref{fig:prec_illustration}, we show the convergence of the unpreconditioned and preconditioned iteration, together with the estimated convergence slopes based on the condition numbers of $A$ and $A(q(A))^2$, respectively.
\end{example}

\section{Polynomial preconditioning for the square root}\label{sec:square_root}

For the square root, we could, in principle, take the same approach as for its inverse: We let $q$ be a polynomial such that $q(A)$ again approximates $A^{-1/2}$, and then use
\begin{equation}\label{eq:square_root_rational}
A^{1/2}b = (A(q(A))^2)^{1/2}(q^{-1}(A)b) = q^{-1}(A)(A(q(A))^2)^{1/2}b,
\end{equation}
where $q^{-1}(z) = 1/q(z).$ Again, $A(q(A))^2$ will be much better conditioned than $A$ for appropriate choices of $q$, so that the Arnoldi approximations for its square root should converge rapidly. The new aspect is that we now also have to compute the action of $q^{-1}(A)$ on a vector. As the reciprocal of a polynomial, the function $q^{-1}(z)$ is a rational function, and its partial fraction expansion can be determined from the zeros of $q$. Thus, evaluating $q^{-1}(A)b$ means that we have to solve several shifted linear systems with $A$, just what we also have to do in a rational Krylov method. 

\begin{remark}\label{rem:connection_to_rational}
One can make the relation to rational Krylov methods more explicit: A general rational Krylov method extracts its iterates from a space of the form $\pi(A)^{-1}\mathcal{K}_k(A,b)$, where the denominator polynomial $\pi$ is of degree $\leq k-1$; see, e.g.,~\cite{guettel2010rational}. Performing $m$ iterations of a polynomially preconditioned method based on~\eqref{eq:square_root_rational} would correspond to using a denominator polynomial $\pi(z) = (q(z))^m$, i.e., cyclically repeat the same $d$ poles, where $d$ is the degree of $q$. Such a cyclic approach is actually common practice in many rational Krylov methods, as repeated poles allow to reuse matrix factorizations if a direct solver is employed~\cite{beckermann2009error,guettel2010rational}. However, in ``standard'' implementations of rational Krylov methods, the polynomial part of the space is built with the same matrix with which shifted systems are solved. In contrast, a method based on~\eqref{eq:square_root_rational} would build the polynomial part of the space via multiplications with $A(q(A))^2$, i.e., extract its approximation from $(q(A))^{-m}\mathcal{K}_m(A(q(A))^2,b)$. Such a method can thus be regarded as a polynomially preconditioned variant of the standard rational Krylov subspace method. The main computational work in rational Krylov methods lies in linear solves, and due to the typically small iteration numbers, orthogonalization cost and memory requirements are seldom an issue. Therefore, polynomial preconditioning appears to be less appealing in this setting.\hfill$\diamond$
\end{remark}

Since our fundamental assumption was that efficient methods for solving shifted systems are not  available, we consider a different approach instead. The idea is to use the identity $A^{1/2} = A^{-1/2}A$ which gives
\[
A^{1/2}b = A^{-1/2}(Ab),
\]
and then use preconditioning for the inverse square root. Note that the square root is also defined for a singular matrix, provided its eigenvalue 0 is semi-simple, whereas the inverse square root is not. Interestingly, even then we can still proceed by using the (polynomially preconditioned) Arnoldi approximation for the inverse square root on $Ab$, since the eigenvalue~$0$ is effectively deflated from the computation, a phenomenon termed {\em implicit desingularization} in~\cite{BenziSimunec2021}. The basis for this is the following theorem.

\begin{theorem} \label{thm:semi-simple}
Assume that $A \in \mathC^{n \times n}$ is singular and that its eigenvalue 0 is a $k$-fold semi-simple eigenvalue. Let
\[
A = TJT^{-1} \mbox{ with $T$ nonsingular and } J = \oplus_{j=1}^m J_{n_j}(\lambda_j)
\]
be its Jordan canonical form with Jordan blocks
\[
J_{n_j} = \begin{bmatrix} \lambda_j & 1 & 0 & \cdots & 0 \\
                             0     & \lambda_j & 1 & \ddots & 0 \\
                             \vdots &  \ddots  & \ddots & \ddots & \\
                             \vdots &    & \ddots & \lambda_j & 1 \\
                             0  & \cdots & \cdots & 0 & \lambda_j 
                             \end{bmatrix}
                             \in \mathC^{n_j \times n_j}
\]
where $n = \sum_{j=1}^m n_j$. Assume that we order the eigenvalues by enumerating the zero eigenvalues first, i.e.,
\[
\lambda_1 = \ldots = \lambda_k = 0 \mbox{ and } n_1 = \ldots = n_k = 1, \enspace \lambda_j \neq 0  \enspace\text{ for }  j =k+1,\ldots,m.
\]
Denote by $t_j$ the columns of $T =[t_1|\cdots |t_n]$, put $T_{\neg k} = [t_{k+1}|\cdots |t_n]  \in \mathC^{n \times (n-k)} $ and let $V$ be the subspace of $\mathC^n$  spanned by $t_{k+1},\ldots,t_n$. 
Furthermore, 
let $\widehat{A}$ denote the restriction of $A$ onto $V$, defined by
\[
\widehat{A}: V \to V, \quad \widehat{A}t_j = At_j, \qquad j=k+1,\ldots,n.
\]
Then $\widehat{A}$, as a linear map from $V$ to $V$, is nonsingular and 
\[
A^{1/2}y = \widehat{A}^{1/2} y \text{ for all } y \in V.
\]
\end{theorem}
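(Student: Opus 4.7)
\medskip

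\noindent\textbf{Proof plan.} The natural approach is to exploit the Jordan structure of $A$. Since $A = TJT^{-1}$ with $J = \mathrm{diag}(0,\ldots,0,J_{k+1},\ldots,J_m)$ where the zero eigenvalue occupies $k$ trivial $1\times 1$ blocks, I would partition $T=[T_k\,|\,T_{\neg k}]$ conformally and set $J_{\neq 0} = \oplus_{j=k+1}^m J_{n_j}(\lambda_j)$, which is nonsingular since its diagonal entries are the nonzero eigenvalues of $A$. The subspace $V = \mathrm{range}(T_{\neg k})$ is the image of $A$'s nonsingular part under the similarity $T$.

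\medskip

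\noindent The first step is to verify that $V$ is $A$-invariant and to identify the matrix of $\widehat{A}$ in the basis $t_{k+1},\ldots,t_n$. A direct computation with the Jordan form gives $AT_{\neg k} = T_{\neg k}J_{\neq 0}$, so indeed $At_j \in V$ for $j>k$ and the matrix of $\widehat{A}$ with respect to the basis $\{t_{k+1},\ldots,t_n\}$ is precisely $J_{\neq 0}$; in particular, $\widehat{A}\colon V\to V$ is nonsingular. The second step is to evaluate both $A^{1/2}$ and $\widehat{A}^{1/2}$ via the Jordan definition of a matrix function. For $A$, one has
\[
A^{1/2} \;=\; T\,\mathrm{diag}\bigl(0,\ldots,0,\,J_{\neq 0}^{1/2}\bigr)\,T^{-1}.
\]
Writing any $y \in V$ as $y = T_{\neg k}w$ and using that the first $k$ components of $T^{-1}y$ vanish, this gives $A^{1/2}y = T_{\neg k}J_{\neq 0}^{1/2}w$. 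For $\widehat{A}^{1/2}$, the Jordan form of $\widehat{A}$ in the chosen basis of $V$ is exactly $J_{\neq 0}$, so $\widehat{A}^{1/2}y = T_{\neg k}J_{\neq 0}^{1/2}w$ as well. The two expressions agree, which yields the theorem.

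\medskip

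\noindent The main subtlety, and the reason the semi-simplicity hypothesis is essential, lies in making sense of $A^{1/2}$ at all. The standard Jordan definition of $f(A)$ requires $f$ to be defined together with sufficiently many derivatives at each eigenvalue, up to the size of the largest corresponding Jordan block. The function $z^{1/2}$ is continuous at $0$ with $0^{1/2}=0$, but its derivative is unbounded there, so nontrivial Jordan blocks at $0$ would render $A^{1/2}$ undefined via this construction. The assumption that the zero eigenvalue is semi-simple (all associated blocks are $1\times 1$) means only the value $f(0)=0$ is required, and this is exactly what makes the computation above legitimate. I would cite~\cite[Theorem~1.13]{higham2008functions} for the Jordan-form definition and the fact that $f$ need only be defined at eigenvalues up to the ascent given by the block sizes.
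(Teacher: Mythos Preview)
Your proposal is correct and follows essentially the same route as the paper: both identify the matrix of $\widehat{A}$ in the basis $t_{k+1},\ldots,t_n$ with the nonsingular Jordan block $J_{\neq 0}=\oplus_{j=k+1}^m J_{n_j}(\lambda_j)$, apply the Jordan-form definition of the square root to obtain $A^{1/2}T_{\neg k}=T_{\neg k}J_{\neq 0}^{1/2}=\widehat{A}^{1/2}T_{\neg k}$, and conclude by expanding an arbitrary $y\in V$ in this basis. Your additional paragraph on why semi-simplicity of the zero eigenvalue is needed for $A^{1/2}$ to be defined is a welcome clarification that the paper leaves implicit.
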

\begin{proof}
As a mapping from $V$ to $V$, the Jordan blocks of $\widehat{A}$ are the blocks $J_{n_j}(\lambda_j)$, $j=k+1,\ldots,m$, and $t_{k+1},\ldots,t_n$ are the corresponding generalized eigenvectors. This shows that $\widehat{A}$ is nonsingular. Moreover, the matrix functions $A^{1/2}$ and $\widehat{A}^{1/2}$ can be characterized using their Jordan canonical forms by their action on the generalized eigenvectors via 
\begin{alignat*}{2}
A^{1/2}T &= T J^{1/2} &&\mbox{ with } J^{1/2} = \oplus_{j=1}^m \big( J_{n_j}(\lambda_j) \big)^{1/2}, \\
\widehat{A}^{1/2} T_{\neg k} &= T_{\neg k} \widehat{J}^{1/2} &&\mbox{ with } \widehat{J}^{1/2} = \oplus_{j=k+1}^m \big( J_{n_j}(\lambda_j) \big)^{1/2}.
\end{alignat*}
So, if $y=  \sum_{j=k+1}^n \eta_j t_j$, then with $a = [ 0 \cdots  0 \; \eta_{k+1}  \cdots \eta_n]^\Tra$ we have
\makeatletter
\tagsleft@false
\makeatother
\begin{align*}
A^{1/2}y &= A^{1/2}Ta \, = \, T J^{1/2} a \, = \, T  \begin{bmatrix} 0_k \\ \oplus_{j=k+1}^m \big( J_{n_j}(\lambda_j) \big)^{1/2} [ \eta_{k+1} \cdots \eta_n]^\Tra  \end{bmatrix} \\
 &=  T_{\neg k} \widehat{J}^{1/2}\begin{bmatrix} \eta_{k+1} \\ \vdots \\ \eta_n \end{bmatrix} = \widehat{A}^{1/2}T_{\neg k}\begin{bmatrix} \eta_{k+1} \\ \vdots \\ \eta_n \end{bmatrix} \, = \, \widehat{A}^{1/2}y.\tag*\endproofhere
\end{align*}
\end{proof}

If $b \in \mathC^n$ is an arbitrary vector and $0$ is a semi-simple eigenvalue of $A$, then the contributions of eigenvectors belonging to this eigenvalue in $y = Ab$ are deflated, i.e., $y$ is of the form assumed in \Cref{thm:semi-simple}. Moreover then, using the notation of \Cref{thm:semi-simple}, we have that for any polynomial $q$ the preconditioned Krylov subspace $\mathcal{K}_m(A(q(A))^2,Ab)$ is the direct sum of $\mathcal{K}_m(\widehat{A}(q(\widehat{A}))^2,\widehat{A}b)$ and $k$ times the space $0$. Practically, this means that we can now choose $q$ such that $q(\widehat{A})$ approximates $\widehat{A}^{-1/2}$ with $\widehat{A}$ nonsingular, and that we can compute Arnoldi approximations for $\widehat{A}^{-1/2}(\widehat{A}b)$ by working with $\mathcal{K}_m(A(q(A))^2,Ab)$ without any modifications at all.    

\section{Choice and evaluation of the polynomial } \label{sec:polynomials}

We now discuss several ways to determine a suitable preconditioning polynomial $q$, i.e., a polynomial of a given degree for which $q(A)$ is a good approximation to $A^{-1/2}$. Of course, the difficulty of this task and its solution depend crucially on properties of the matrix $A$.

An important question is how far we can guarantee or at least expect that the relation \eqref{eq:root_of_square} is fulfilled for the polynomial that we choose, i.e., whether we have
$((q(A))^2)^{1/2} = q(A)$. In this respect, we can state the following general result. 

\begin{theorem}  \label{thm:approx:_quality}
Assume $\spec(A) \subseteq \mathC^+$ and that $q$ approximates $z^{-1/2}$ on $\spec(A)$ uniformly  in a relative sense with accuracy $\tfrac{1}{\sqrt{2}}$, i.e., we have
\begin{equation} \label{eq:relative_accuracy_sqrt2}
|q(\lambda) - \lambda^{-1/2}| \leq \frac{1}{\sqrt{2}} |\lambda^{-1/2}| \enspace \text{ for } \lambda \in \spec(A).
\end{equation}
Then $((q(A))^2)^{1/2} = q(A)$. 
\end{theorem}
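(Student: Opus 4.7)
The plan is to invoke equation \eqref{eq:root_of_square_A}, which reduces the claim to showing that $\spec(q(A)) \subseteq \mathC^+$. Since the eigenvalues of $q(A)$ are exactly $\{q(\lambda) : \lambda \in \spec(A)\}$, the task boils down to a pointwise argument: for each $\lambda \in \spec(A)$, show that $\real{q(\lambda)} > 0$.

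The key geometric observation is that because $\lambda \in \mathC^+$, we have $\arg(\lambda) \in (-\pi/2, \pi/2)$, and taking the principal branch of the square root gives $\arg(\lambda^{-1/2}) = -\tfrac{1}{2}\arg(\lambda) \in (-\pi/4, \pi/4)$. Writing $w \coloneqq \lambda^{-1/2}$, this yields $\real{w} = |w|\cos(\arg w) > |w|/\sqrt{2}$. The hypothesis~\eqref{eq:relative_accuracy_sqrt2} says $|q(\lambda) - w| \leq |w|/\sqrt{2}$, i.e., $q(\lambda)$ lies in a closed disk of radius $|w|/\sqrt{2}$ centered at $w$. Combining these two facts,
\begin{equation*}
\real{q(\lambda)} \;\geq\; \real{w} - |q(\lambda) - w| \;>\; \frac{|w|}{\sqrt{2}} - \frac{|w|}{\sqrt{2}} \;=\; 0,
\end{equation*}
where the strict inequality comes from $\arg(\lambda)$ lying strictly inside $(-\pi/2,\pi/2)$. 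Hence $q(\lambda) \in \mathC^+$ for every $\lambda \in \spec(A)$, and therefore $\spec(q(A)) \subseteq \mathC^+$. Applying~\eqref{eq:root_of_square_A} concludes the argument.

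The only subtlety, and the step I would double-check carefully, is the strictness of $\real{w} > |w|/\sqrt{2}$: this rests on interpreting $\mathC^+$ as the \emph{open} right half-plane, as stated right after~\eqref{eq:root_of_square_A}, which forces $\cos(\arg w) > 1/\sqrt{2}$ rather than $\geq$, and thus lets the weak inequality $|q(\lambda)-w| \leq |w|/\sqrt{2}$ in the hypothesis still deliver a strictly positive real part. No regularity assumption on $A$ (diagonalizability, etc.) is needed, since the identity $((q(A))^2)^{1/2} = q(A)$ is read off from the Jordan form via~\eqref{eq:root_of_square_A} once the spectral containment is established.
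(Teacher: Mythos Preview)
Your proof is correct and follows essentially the same route as the paper's: both arguments use that $\lambda\in\mathC^+$ forces $\arg(\lambda^{-1/2})\in(-\pi/4,\pi/4)$, and then combine this with the disk condition~\eqref{eq:relative_accuracy_sqrt2} to place $q(\lambda)$ in the open right half-plane, invoking~\eqref{eq:root_of_square_A} to finish. Your version phrases the geometry via $\real{w}>|w|/\sqrt{2}$ rather than via $|\arg(q(\lambda))|<\pi/2$, and is in fact a bit more explicit than the paper about why the final inequality is strict.
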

\begin{proof} Since $\spec(A) \subseteq \mathC^+$, we have $|\arg(\lambda^{-1/2})| \leq \tfrac{\pi}{4}$ for all $\lambda \in \spec(A)$. Because of \eqref{eq:relative_accuracy_sqrt2}, this implies
\[
|\arg(q(\lambda))| < \frac{\pi}{2},
\]
which in turn gives $|\arg((q(\lambda))^2)|  < \pi$ and thus $((q(\lambda))^2)^{1/2} = q(\lambda)$ for all $\lambda \in \spec(A)$, i.e., $((q(A))^2)^{1/2} = q(A)$.
\end{proof}

The assumption on the relative approximation error in \eqref{eq:relative_accuracy_sqrt2} is 
quite restrictive.  Heuristically, it seems justified to assume that for small values of $\lambda \in \mathC^+$, a polynomial $q$ that approximates the inverse square root is such 
that $q(\lambda) \in \mathC^+$, too, since, after all, $\lambda^{-1/2} \in \mathC^+$ is large with $| \arg(\lambda^{1/2})| \leq \tfrac{\pi}{4}$. For large values of $\lambda$ though, it might happen 
that $q(\lambda) \in \mathC^-$. If this is the case, we do not have $((q(A))^2)^{1/2} = q(A)$, meaning that in the left preconditioned method, e.g., we are starting with the ``wrong'' vector $c = 
q(A)b$ rather than the correct $\widehat{c} = ((q(A))^2)^{1/2}b$. The expansions of $c$ and $\widehat{c}$ in terms of the generalized eigenvectors of $A$ differ in those eigenvectors 
that belong to the large eigenvalues $\lambda$ for which $q(\lambda) \in \mathC^-$.  This difference won't affect our computation of $A^{-1/2}c$ too much, since the inverse square root effectively damps the components belonging to eigenvectors with large eigenvalues. 

To summarize this discussion, we see that it is not trivial to guarantee that we have  $((q(A))^2)^{1/2} = q(A)$. In special situations, we might be able to actually assert this with mathematical certainty. In many other cases, we might have strong heuristics or numerical indications that this identity holds. 
One such situation is when $\spec(A) \subset \mathC^+$, where then $1/\sqrt{\lambda}$ is in the wedge $\{z \in \mathC^+, |\arg(z)| \leq \pi/4\}$, so that we can expect $q(\lambda) \in \mathC^+$ at least for all eigenvalues $\lambda$ which are not too large. Moreover, if we do not aim for particularly high accuracy, we can accept that for some of the large eigenvalues $\lambda$ we violate $((q(\lambda))^2)^{1/2} = q(\lambda)$ when computing the action of the inverse square root. In any case, i.e., even when $((q(A))^2)^{1/2} \neq q(A)$, left or right polynomial preconditioning always yields an inverse square root which, though, might be non-primary: With left preconditioning we obtain $f = Bb$ with  $B = (A(q(A))^2)^{-1/2}((q(A))^2)^{1/2}$ which satisfies $B^2 = A^{-1}$, and similarly for right preconditioning. We refer to \cite{higham2008functions} for further considerations on non-primary, primary, and principal (inverse) square roots.       
\subsection{Chebyshev expansions} \label{cheb:subseq}
A real function $f: [a,b] \subseteq \mathR \to \mathR$ which is absolutely integrable with respect to the weight function $w(x) = ((x-a)(b-x))^{-1/2}$ gives rise to a truncated Chebyshev series expansion
\begin{equation} \label{eq:cheb_expansion}
\sum_{i=0}^k c_iT^{[a,b]}_i(z),
\end{equation}
where $T_i^{[a,b]}$ is the (scaled) Chebyshev polynomial of the first kind of degree $i$ for the interval $[a,b]$, and the coefficients $c_i$ are obtained as
\[
c_0 = \frac{1}{\pi} \int_a^b w(z) \cdot f(z)T_0^{[a,b]}(z) dz, \enspace\enspace  c_i = \frac{2}{\pi}\int_a^b w(z) \cdot f(z)T_i^{[a,b]}(z) dz, \; i= 1,2,\ldots \, .
\]

So, if the spectrum of $A$ is enclosed in the real interval $[a,b] \subset (0, \infty)$---which is the case in particular if $A$ is Hermitian and positive definite---we can, for a given degree, take $q$ as the corresponding truncated Chebyshev expansion for $z^{-1/2}$ on $[a,b]$ . The theory and the practical implementation of Chebyshev expansions are well studied and understood; see, e.g.,~\cite{boyd2001chebyshev}. In particular, we can use  a straightforward matrix-vector version of the Clenshaw recurrence~\cite{Clenshaw1955,FoxParker1968} to accurately compute the action of $q(A)$ on a vector $b$.

If $\spec(A) \subseteq [a,b]$ with $a > 0$, the relation $((q(A))^2)^{1/2} = q(A)$ which we need in order to obtain the principal square root (see \eqref{eq:root_of_square}), is 
fulfilled for the truncated Chebyshev expansion polynomial $q$ if $q$ is positive on $[a,b]$. We can check this at least numerically by evaluating the polynomial on a discrete set of points in $[a,b]$. 

\subsection{Polynomials interpolating at (harmonic) Ritz values}\label{subsec:ritz}
Another way to obtain a preconditioning polynomial is by using the Arnoldi method itself. The Ritz values, i.e., the eigenvalues of the upper Hessenberg matrix $H_{d}$ arising from $d$ steps of the Arnoldi process, can be used as approximations for the eigenvalues of $A$ (and they tend to first approximate exterior eigenvalues well); see, e.g.~\cite[Section~6]{saad2011numerical}. 

It is therefore a natural idea to choose the preconditioning polynomial $q$ in our method as the polynomial of degree $d-1$ that interpolates $1/\sqrt{z}$ at the $d$ Ritz values. This has the attractive feature that it does not require any {\em a priori} knowledge about the spectral region of $A$ but rather adapts itself automatically to the spectrum of $A$. This approach comes at the cost of $d$ additional Arnoldi steps, i.e., $d$ matrix-vector products and $d(d-1)/2$ inner products. The Arnoldi process for constructing $H_{d}$ can either be started with the vector $b$, or with a randomly drawn vector, which might sometimes be preferable; see~\cite[Section~3.6]{loe2022toward} for a discussion of this topic in the context of polynomial preconditioning for linear systems.

Alternatively, one might want to use \emph{harmonic Ritz values} as interpolation points. 
These are the inverses of the Ritz values of $A^{-1}$ with respect to the space $A\mathcal{K}_d(A,b)$ and can be computed as the eigenvalues of
\[
\widetilde{H}_d := H_d + h_{d+1,d}^2H_d^{-*}e_de_d^*.
\]
The inverse $\mu^{-1}$ of a harmonic Ritz value $\mu$ is contained in the field of values $\mathcal{F}(A^{-1}) \subset \mathC$ of $A^{-1}$~\cite[Section~5.1]{sleijpen2000jacobi},
and thus $|\mu| > 1/\max\{|z|: z \in \mathcal{F}(A^{-1})\}$. This shows that for a nonsingular matrix $A$ the harmonic Ritz values are bounded away from $0$, while standard Ritz values can become $0$ (or arbitrarily close to $0$) if $0 \in \mathcal{F}(A)$. It is this property which often makes harmonic Ritz values more attractive than standard Ritz values when the function has a pole at $0$. Using preconditioning polynomials based on (harmonic) Ritz value information has recently received quite a lot of attention in the context of solving linear systems~\cite{liu2015polynomial,loe2020phd,loe2022toward,loe2020polynomial} and eigenvalue computations~\cite{embree2021polynomial}.

If the field of values 
$\mathcal{F}(A)$ of $A$ is in $\mathC^+$, so are its spectrum and its standard as well 
as its harmonic Ritz values. So we know that the constructed polynomial $q$ satisfies $((q(\mu))^2)^{1/2} = q(\mu) = \mu^{-1/2}$ for the standard or harmonic Ritz values $\mu$, and we can  interpret this as an indication that we may expect that indeed $((q(\lambda))^2)^{1/2} = q(\lambda)$ for $\lambda \in \spec(A)$.  

\af{When working with interpolating polynomials, care has to be taken to use a representation which is favorable numerically. Following \cite{NacReiTre92} and \cite{liu2015polynomial,loe2020phd,loe2022toward,loe2020polynomial}, we represent in our numerical experiments the polynomial in its Newton form based on a Lej\`a ordering of the Ritz values and evaluate it with a Horner-type scheme.  }

\subsection{Polynomials obtained via error minimization}\label{subsec:ls_polynomials}
A more involved way to obtain a preconditioning polynomial is adapting the strategy presented in~\cite{ye2021} for GMRES. 
Let $\Gamma$ be a contour, and assume that its interior contains the spectrum of $A$ or at least the major part of it. Such $\Gamma$ may be available due to {\em a priori} information or it can be constructed as a polygon from the Ritz values of the Arnoldi process; see~\cite{ye2021}. 
Note that $A$ allows for a unique (the ``principal'') inverse square root $A^{-1/2}$ if it has no eigenvalues on $(-\infty,0]$. Thus, we can further assume that $\Gamma$ and its interior exclude $(-\infty,0]$.

If we discretize $\Gamma$ into a set of points $\omega = \{z_1, \dots, z_N\}$, then an inner product for the space $\mathcal{P}_{d-1}$ of polynomials of degree at most $d-1<N$ is given by
\begin{equation}\label{eq:polynomial_inner_product}
    \langle p_1, p_2 \rangle_\omega = \sum_{i=1}^N p_1(z_i) \overline{p_2(z_i)}
\end{equation}
with corresponding norm $\|\cdot\|_\omega$. 
We obtain the preconditioning polynomial $q(z)$ by solving the least-squares problem (where we extend $\| \cdot \|_\omega$ to functions other than polynomials)
\begin{equation*}
    \min_{q\in\mathcal{P}_{d-1}} \|z^{-1/2} - q(z)\|_\omega.
\end{equation*}

The theoretical justification for this approach is that if the discretization is fine enough, the absolute value of the approximation error $z^{-1/2}-q(z)$ is small on all of the boundary $\Gamma$. 
As this error is holomorphic (note that we excluded $(-\infty,0]$ from the interior of $\Gamma$), the maximum modulus principle~\cite[Theorem~10.24]{Rudin} states that $|z^{-1/2} - q(z)|$ cannot attain a larger value in the interior of $\Gamma$. Since $\Gamma$ is constructed such that it approximately encloses the eigenvalues of $A$, $q(z)$ should thus yield a good approximation of $z^{-1/2}$ for all eigenvalues of $A$.

Because of the pole of $z^{-1/2}$ at $0$, it becomes increasingly difficult to acquire a polynomial with small approximation error the closer $\Gamma$ is to $0$. Thus, it can be helpful to require a minimum absolute value of $\Gamma$. We implement this by replacing the part of $\Gamma$ closer to $0$ than a user-specified minimum distance by the circular arc of corresponding radius.

As described in~\cite{ye2021}, the above choice for a polynomial inner product allows us to use the standard Arnoldi process to construct an implicit polynomial basis. For this, we represent a polynomial $p$ by the unique vector $\vect{p}$ containing its values on $z_1,\dots,z_N$, i.e., $\vect{p}=[p(z_1), \dots, p(z_N)]^\Tra$. Then the inner product \eqref{eq:polynomial_inner_product} corresponds to the standard Euclidean inner product on these vector representations.
If we apply the Arnoldi process to the matrix $\operatorname{diag}(z_1,\dots,z_N)$ and the vector $[1,\dots,1]^\Tra/N^{1/2} \in\mathR^N$, then the $k$th Arnoldi vector represents the $k$th orthonormal basis polynomial $p_{k-1}(z)$ (with respect to \eqref{eq:polynomial_inner_product}) of degree $k-1$. We express $q(z)$ in terms of these basis polynomials, so the least-squares problem
\begin{equation*}
    \min_{q\in\mathcal{P}_{d-1}} \|z^{-1/2} - q(z)\|_\omega = \min_{\alpha_j \in \mathC} \|z^{-1/2} - \sum_{j=0}^{d-1} \alpha_j p_j(z)\|_\omega
\end{equation*} 
is equivalent to 
\begin{equation}\label{eq:ls_problem}
    \min_{\alpha_j \in \mathC} \| \vect{z^{-1/2}} - \sum_{j=0}^{d-1} \alpha_j \vect{p_j} \|_2 = \min_{\alpha \in \mathC^{d}} \| \vect{z^{-1/2}} - P_{d} \alpha \|_2,
\end{equation}
where the matrix $P_{d}=[\vect{p_0} | \dots | \vect{p_{d-1}}]\in\mathC^{N\times d}$ contains the Arnoldi vectors as its columns. Since they are orthonormal, the vector $\alpha$ with components $\alpha_i$  is obtained as $\alpha = P_{d}^* \vect{z^{-1/2}}$.
Once $\alpha$ is known, we can evaluate $q(A)v$ using an Arnoldi-like process that does not require knowledge of $P_{d}$, see~\cite[Section~3]{ye2021} for more information. This process involves $d-1$ matrix-vector products with $A$ and $\mathcal{O}(d^2)$ vector operations, but no inner products.

Within this approach, we know the value of $q(z)$ on discrete points on the contour $\Gamma$ which (approximately) contains the spectrum of $A$. If then $q(z) \in \mathC^+$ for these points, this can be taken as an indication that indeed $q(\lambda) \in \mathC^+$ for all $\lambda \in \spec(A)$ as is required in \eqref{eq:root_of_square}: If $q(z) \in \mathC^+$, i.e., $\real{q(z)} > 0$ not only for the discrete points on the contour $\Gamma$, but on all of the contour, then $\real{q(z)} > 0$ for all $z$ inside the contour, since the real part of the holomorphic function $q$ is harmonic and thus attains its minimum for all $z$ inside $\Gamma$ on the boundary $\Gamma$, see Section~15.1 and in particular Theorem~15.1g in \cite{henrici3}. 

We note that we could actually add $\real{\vect{q}}\geq 0$ as a constraint to \eqref{eq:ls_problem}. This constraint can be written as
\begin{equation*}
  0 \leq 
  \begin{bmatrix}
    I & 0
  \end{bmatrix}
  \begin{bmatrix}
    \real{\vect{q}}\\ \imag{\vect{q}}
  \end{bmatrix} 
  =
  \begin{bmatrix}
    I & 0
  \end{bmatrix}
  \begin{bmatrix}
    \real{P_{d+1}} & -\imag{P_{d+1}} \\
    \imag{P_{d+1}} & \real{P_{d+1}}
  \end{bmatrix}
  \begin{bmatrix}
    \real{\alpha} \\ \imag{\alpha}
  \end{bmatrix}.
\end{equation*}
Furthermore, we can rewrite \eqref{eq:ls_problem} in a similar manner, separating real and imaginary components, so that in total we obtain a real inequality-constrained linear least-squares problem. These are known to be equivalent to convex quadratic programming problems, for which several efficient algorithms exist; see, e.g., \cite[Section~5.2.2]{bjorck} and references therein. In the following examples, this will not be necessary, however, as the constraint will be satisfied without having been required explicitly in the optimization problem \eqref{eq:ls_problem}.

\section{Numerical examples} \label{sec:numerics}
We now illustrate the benefits of polynomial preconditioning for several examples. The experiments in \Cref{ex:poisson,ex:graph_laplace} are run in MATLAB R2022a on a computer with Intel Core i7-1185G7 8-core CPU (3.0 GHz) and 32 GB RAM under Ubuntu 20.04, while \Cref{ex:qcd} has been implemented in parallel via \texttt{C} and \texttt{MPI} and was run on the JUWELS Cluster of the J{\"u}lich Supercomputing Centre. Each node of JUWELS consists of two Intel Xeon Platinum 8168 CPUs running at 2.7 GHz and with 96 GB of RAM; see \cite{juwels} for further information.

\subsection{Example~\arabic{subsection}: Inverse square root for the 3d discrete Laplace operator} \label[example]{ex:poisson} 
In our first example we take $A \in \mathbb{R}^{n \times n}, n = 10^6$, as the discrete, three-dimensional Laplace operator on a grid with $100 \times 100 \times 100 $ equidistant interior points. The spectral interval of $A$ is $[6(1-\cos\bigl(\tfrac{\pi}{N+1}\bigr)),\, 12-6(1-\cos\bigl(\tfrac{\pi}{N+1}\bigr))]$, and we obtained preconditioning polynomials via the Chebyshev expansion over the spectral interval as described in \cref{cheb:subseq}. 
We (numerically) checked that $q_{d-1}(z)$ for the various degrees $d-1$ used does not have a zero on the spectral interval $[a,b]$ and thus maps $[a,b]$ onto a positive real interval. This means that by  \eqref{eq:root_of_square_A} we indeed have $((q(A))^2)^{1/2} = q(A)$.

The top row of \Cref{fig:poisson3d} depicts the relative 2-norm $\|f_m-A^{-1/2}b\| / \|A^{-1/2}b\|$ of the error of the Arnoldi approximation $f_m$ as a function of the iteration number $m$; see \Cref{alg:right_prec_Arnoldi}. The ``exact'' solution $A^{-1/2}b$ that we use to compute the errors is actually an approximation computed using a restarted Arnoldi iteration with guaranteed error bounds as described in~\cite{frommer2016error} with the bound set to $10^{-14}$.

The left plot in \Cref{fig:poisson3d} is without preconditioning, the right plot is for $d=8$, i.e., we use a degree 7 polynomial as a right preconditioner. Both plots also show the relative $2$-norm $\|f_{m+k}-f_{m}\|/\|f_{m+k}\|$ of the difference of two iterates (as dashed lines), a quantity that is easily available also for large matrices---even without explicitly forming the iterates. Here, the parameter $k \geq 1$ controls how frequently this error estimate is evaluated (and thus, how frequently the inverse square root of the respective $m \times m$ Hessenberg matrix needs to be computed). The plots show that these norms of the differences start to stagnate at the same time when the errors start to stagnate, so this observation can be used to devise a simple stopping criterion. To account for the fact that iterations of the preconditioned method are much more costly (in particular for higher values of $d$) than iterations of the plain method, but one expects that much fewer iterations are required for convergence, we make the parameter $k$ smaller the larger $d$ is. Specifically, in this experiment with $d = 1, 2, 4, \dots, 64$, we choose $k = 64/d$, so that we check the residual norm every 64th iteration in the plain method and every iteration when $d = 64$.

\begin{figure}
\includegraphics[width=0.48\textwidth]{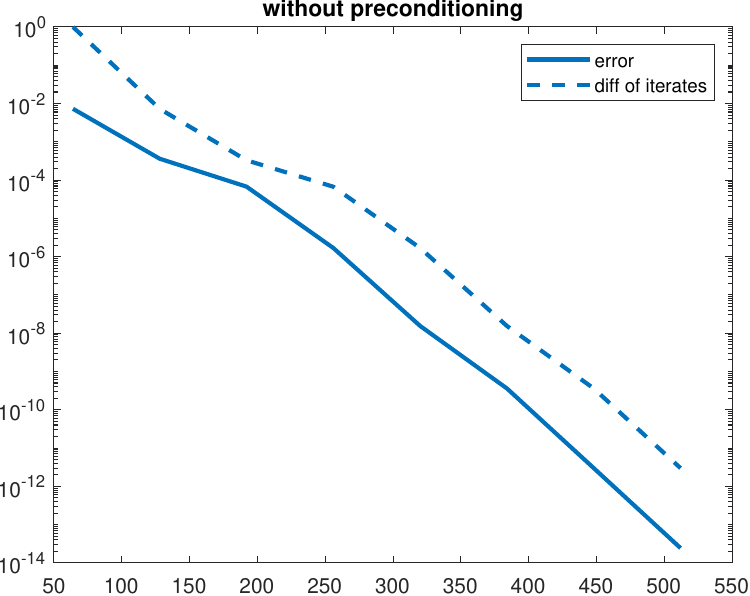}
\hfill
\includegraphics[width=0.48\textwidth]{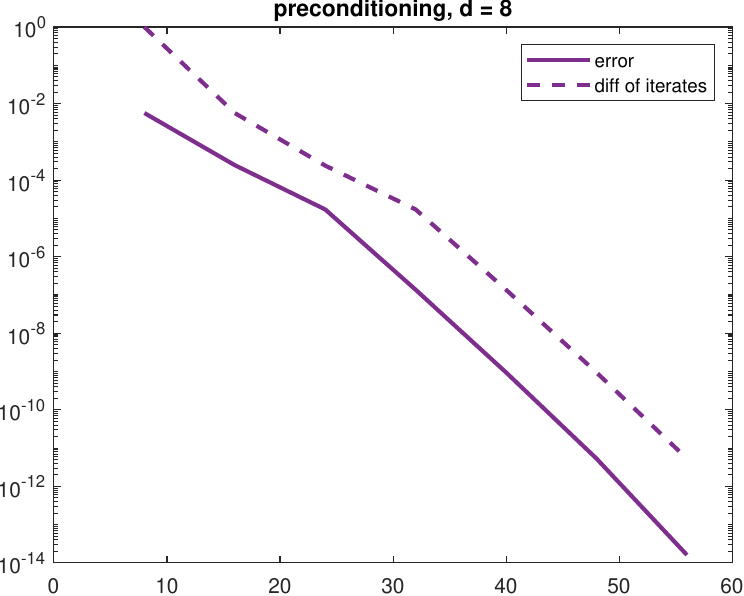}
\\[3ex]
\includegraphics[width=0.48\textwidth]{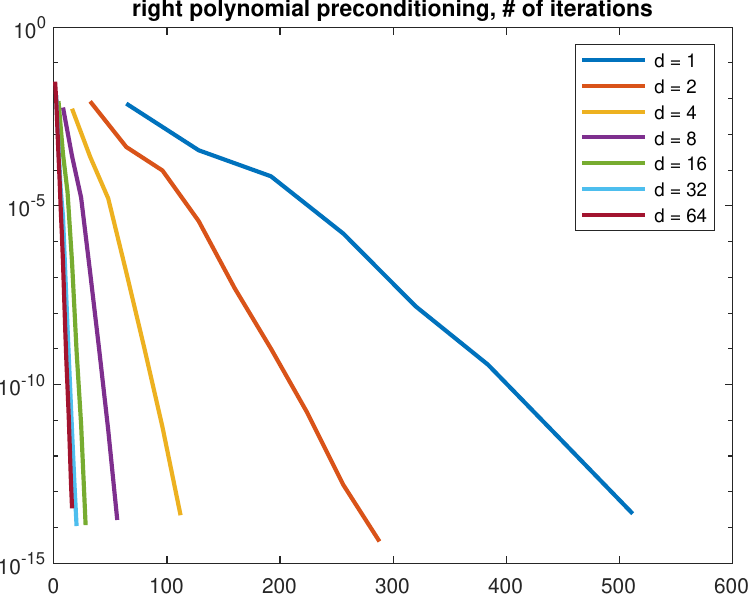}
\hfill
\includegraphics[width=0.48\textwidth]{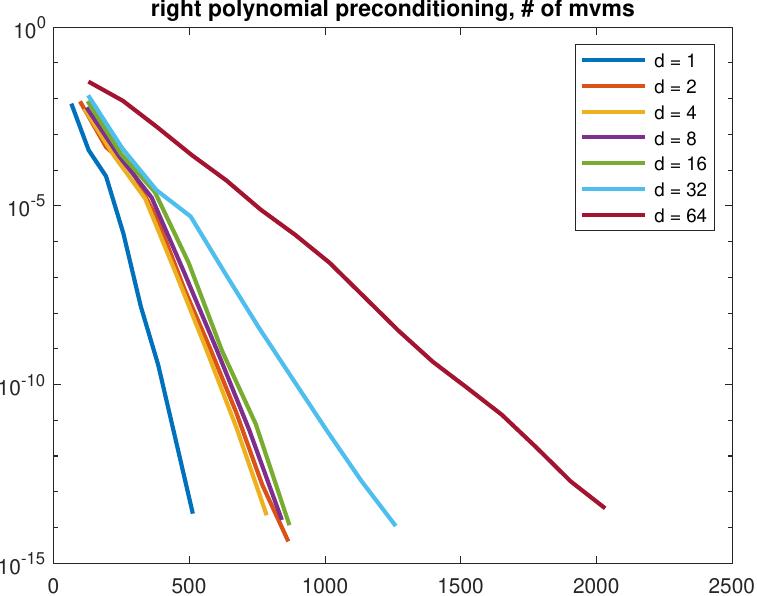}
\caption{Relative error when approximating $A^{-1/2}b$ with Chebyshev preconditioning polynomials of various degrees $d-1$ ($d = 1$ corresponds to an unpreconditioned method), where $A$ is the discretization of the three-dimensional Laplace operator and $b$ is a random vector of unit norm.} \label{fig:poisson3d}
\end{figure}

\begin{table}
\caption{Timings and operation counts for approximating $A^{-1/2}b$ with Chebyshev preconditioning polynomials of various degrees, where $A$ is the discretization of the three-dimensional Laplace operator and $b$ is a random vector of unit norm. The polynomial $q$ has degree $d-1$, i.e., the first row corresponds to the plain, unpreconditioned Lanczos method. Run times are given both for the standard (1P) and two-pass (2P) version of the algorithm. \label{tab:poisson3d}}
\begin{center}
\scalebox{.9}{%
\begin{tabular}{c|ccccc} 
$d$ & iterations & mvms & inner products & time 1P (in s) & time 2P (in s)  \\ \hline
1  &  512 &  512  & 1\,024 & 8.0   & 14.5 \\
2   &  288 &  864  & 576 & 12.9   & 25.8 \\
4   &  112 &  784  & 224  & 10.7   & 20.4 \\
8   &  56  &  840  & 112  & 12.2   & 20.7 \\
16  &  28  &  868  & 56  & 10.8  & 22.8 \\ 
32  &  20  &  1\,260  &  40  & 16.9  & 32.6 \\ 
64  &  16  &  2\,032  &  32  & 24.3  & 53.9
 \end{tabular}}
\end{center}    
\end{table}

The bottom part of \Cref{fig:poisson3d} shows results for various degrees $d-1$ of the preconditioning polynomial. The left plot depicts the error as a function of the number of Arnoldi iterations, while the right plot shows it as a function of the number of matrix-vector multiplications (mvms). Recall that preconditioning with a degree $d-1$ polynomial requires $2d-1$ matrix-vector multiplications per iteration. We can take this plot as an information about the computational cost, since mvms are the by far most costly operations here. We observe that the preconditioned Arnoldi methods take increasingly more mvms as $d$ is increased, and they all take more mvms than the standard method. However, if storage were an issue and we could not store the 512 Arnoldi vectors, the standard method could not be performed as such but rather as a two-pass Lanczos approach, where the first part does not store the basis vectors but just assembles the tridiagonal matrix $H_m$ (see, e.g.,~\cite{frommer2008matrix,GuettelSchweitzer2021}). Once the coefficient vector $H_m^{-1/2}e_1\|b\|$ is computed, a second pass is done which then combines the anew computed basis vectors using these coefficients. In this way, the number of mvms is actually twice as much---1\,024 in our example---as what we see in the plot. Then, if for example we can store the about 110 vectors needed for preconditioning with $d=4$, the preconditioned method with its 784 mvms takes 24\% less mvms than the standard method. 

In \Cref{tab:poisson3d} we report the wall clock time for running the plain and preconditioned Lanczos method, both in a one pass and two pass version, together with the number of iterations, mvms and inner products needed to reach a relative error norm below $10^{-12}$. Among all tested methods, plain (unpreconditioned) Lanczos has the smallest run time, followed by polynomial preconditioning with $d = 4$. As explained in the previous paragraph, for very large scale problems, it might actually be relevant to compare plain \emph{two}-pass Lanczos with that of preconditioned \emph{one}-pass Lanczos, as the latter method builds a much smaller subspace and is thus less likely to require two passes due to memory constraints. We therefore note that for all values of $d$ between $2$ and $16$, the run time of the one-pass preconditioned method is lower than that of the two-pass unpreconditioned method.

\subsection{Example~\arabic{subsection}: Sign function for the overlap operator in lattice QCD} \label[example]{ex:qcd}
For a square matrix $A$, its sign function $\sign(A)$ can be expressed as $A(A^2)^{-1/2}$, so that the computational burden in computing $\sign(A)b$ resides in computing $(A^2)^{-1/2}b$. Note that $A^2$ will not be computed explicitly; we rather compute $A^2x$ for a given vector $x$ as two consecutive mvms with $A$. 

The sign function of a large, non-Hermitian matrix arises in the overlap operator in lattice Quantum Chromodynamics (QCD) as we now shortly explain.  
QCD is the fundamental physical theory of the quarks and gluons as the constituents of matter, and their interaction via the strong force is described by the Dirac operator. 
The Wilson-Dirac operator $D_{w}$ arises as a discretization of the Dirac operator on a finite four-dimensional Cartesian lattice. The Wilson-Dirac operator acts on discrete spinor fields which have twelve components per grid point, corresponding to all possible combinations of three color and four spin indices \cite{gattringer2009quantum}. 

Now, $D_{ov}(\mu)$, the {\em overlap Dirac operator} at chemical potential $\mu$, preserves chiral symmetry, an important physical property, on the lattice while other discretizations as, e.g., $D_w$ do not. To be specific, the overlap Dirac operator takes the form~\cite{bloch2006overlap,hernandez1999locality,Neuberger1998}
\begin{equation*}\label{eq:overlap_operator}
    D_{ov}(\mu) = I + \rho \Gamma_{5} \sign{( \underbrace{\Gamma_{5} D_{w}(m_{w},\mu) )}_{=: Q(m_w,\mu)}}.
\end{equation*}
Here, $\Gamma_5$ is a simple diagonal matrix which acts as the identity on spinor components belonging to spins 1 and 2 and as the negative identity on  those belonging to spins 3 and 4, and $\rho \in (0,1)$ is a mass parameter, typically close to $1$.
In the argument of the sign function, $D_{w}(m_{w},\mu)$ is the massless Dirac-Wilson operator with an appropriately chosen shift $m_{w} \in (-2,0)$ and a chemical potential of $\mu$. It is the presence of $\mu \neq 0$ that makes $Q(m_{w},\mu)$ non-Hermitian; see \cite{bloch2006overlap}. For notational simplicity, we abbreviate $Q(m_w,\mu)$ as $Q_\mu$ from now on. 
   
In our computations, we use a Dirac matrix for a grid with dimensions $64{\times}32^{3}$ coming from a physically relevant ensemble provided by the lattice QCD group at the University of Regensburg via the Collaborative Research Centre SFB-TRR55, with parameters $m_{0} = -0.332159624413$ and $c_{sw} = 1.9192$ \cite{bali2014moment}. Hence $Q_\mu$ has $25\,165\,824$ rows and columns. We took $m_{w} = -1.4$ and $\mu=0.3$ which are physically relevant values. All our results relate to the evaluation of $(Q_\mu^2)^{-1/2}b$. In an actual simulation in lattice QCD one has to solve systems with $D_{ov}(\mu)$ repeatedly, using an iterative solver. Each iteration then requires an evaluation of $\sign(Q_\mu)b$ for some vector $b$, and this in turn is obtained by computing $(Q_\mu^2)^{-1/2}b$.
 
We obtain the preconditioning polynomial $q$ via interpolation at the Ritz values (of $Q_\mu^2$) as outlined in \cref{subsec:ritz}. Since all Ritz values were always contained in the right half-plane, so were the values of $q(z)= z^{-1/2}$ at these Ritz values, which is important in view of \eqref{eq:root_of_square}. When using harmonic Ritz values instead of standard Ritz values, we consistently needed about $10\%$ more iterations across all tested values of $d$. For this reason, we do not report results using harmonic Ritz values.

Our numerical experiments were run on 64 and 256 nodes on the JUWELS Cluster and we use 2 \texttt{MPI} processes per node and 24 \texttt{OpenMP} threads per process.\footnote{code available at \url{https://github.com/Gustavroot/sign_function_LQCD_with_polyprec}} 
As the ``exact" solution $f^{*}$, we took the unpreconditioned $f_{m+k}$ with $m=6\,000$ and $k=64$ as before, and we obtained the error measure $\|f_{m}-f_{m+k}\|_{2}/\|f_{m+k}\|_{2} \approx 3.0\cdot 10^{-10}$.

Computing $H_{m}^{-1/2}$ for the Hessenberg matrices $H_{m}$ is done using the SLEPc library \cite{slepc}, with its cost in seconds as a function of $m$ shown in the right plot of \Cref{fig:overlap_qcd}.
The time to obtain $H_{m}^{-1/2}$ was around five times larger than that for running the whole Arnoldi with $m=6\,000$ when running on 64 nodes on JUWELS, rendering the computation of $\|f_{m+k}-f_{m}\|_{2}/\|f_{m+k}\|_{2}$ ten times more expensive than the time for Arnoldi for that value of $m$.
These times appear to be overly large, and we are convinced that they could be reduced substantially by looking at the details of the implementation. Rather than doing that, though, we decided to simply exclude the time needed for checking the stopping criterion from all the timings reported in \Cref{tab:overlapQCD}. Note that the unpreconditioned method would have to check the stopping criterion more often and evaluate the sign function on larger Hessenberg matrices, so excluding this from the timings is in favor of the unpreconditioned method.

The unpreconditioned case converges quite slowly so that running the Arnoldi method with full orthogonalization up to $m=6\,000$ is certainly not realistic in practice because of the very high orthogonalization cost.  For our comparison with the preconditioned methods, we therefore reduced the iteration number of unpreconditioned Arnoldi to $1\,600$ iterations. This resulted in a relative error of around $4.0 \cdot 10^{-5}$ at $m=1\,600$, which we then used as
the stopping criterion in the preconditioned runs, too. 
The left part of \Cref{fig:overlap_qcd} shows the relative error as a function of the iteration counts and \Cref{tab:overlapQCD} gives operations counts and timings. We checked the error every $k=64/d$ iterations.

\begin{figure}
\includegraphics[width=0.48\textwidth]{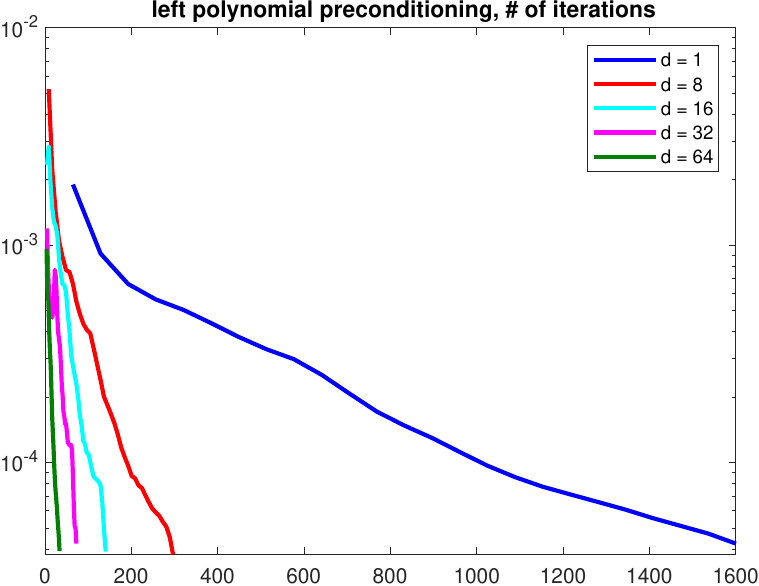}
\hfill
\includegraphics[width=0.48\textwidth]{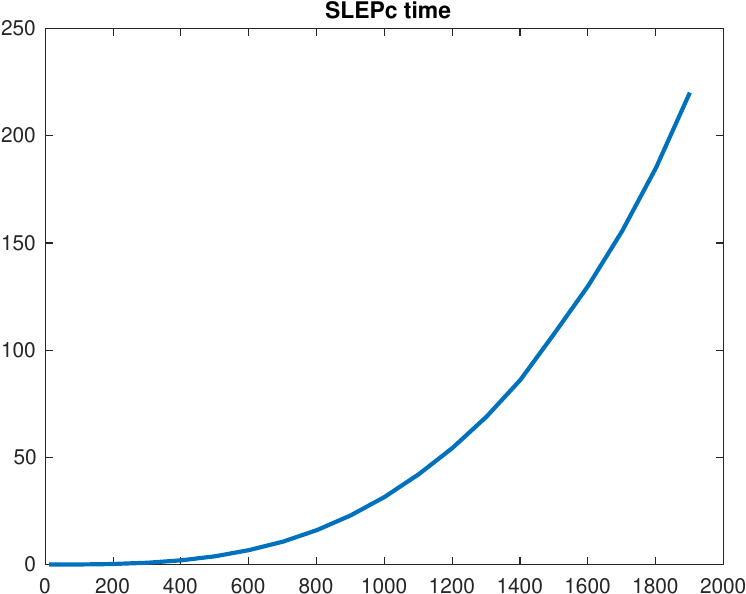}
\caption{Results for approximating $(Q_\mu)^{2})^{-1/2}b$ with Arnoldi preconditioning polynomials of various degrees, $b$ a random vector. Left: relative error as a function of the Arnoldi basis size up to a value of $1\,600$. Right: time (in s) to compute $H_{m}^{-1/2}$ as a function of $m$ with SLEPc.} \label{fig:overlap_qcd}
\end{figure}

\begin{table}
\caption{Timings and operation counts for approximating $ ((Q_\mu)^{2})^{-1/2}b$ with Arnoldi preconditioning polynomials $q$ of various degrees $d-1$, $b$ a random vector. The polynomial $q$ has degree $d-1$, i.e., the first row corresponds to the plain, unpreconditioned Arnoldi method. \label{tab:overlapQCD}}    
\begin{center}
\begin{tabular}{c|ccccc}
$d$ & iterations & mvms & inner products & time 64 nodes (in s) & time 256 nodes (in s)  \\ 
\hline
 1  &   1\,600 &  3\,200    &  1\,279\,200  &  127.8  &  105.8  \\
 8   &  296    &  8\,910    &  42\,510      &  25.7   &  9.8    \\
 16  &  140    &  8\,742    &  9\,991       &  12.3   &  7.8    \\
 32  &  72     &  9\,198    &  3\,125       &  11.6   &  7.4    \\
 64  &  33     &  8\,636    &  2\,578       &  10.6   &  5.5
\end{tabular}
\end{center}
\end{table}

From \Cref{fig:overlap_qcd} and \Cref{tab:overlapQCD} we see that, as in the previous example, polynomial preconditioning reduces the number of iterations while increasing the total number of mvms. Still, the reduction in execution time is very pronounced for the preconditioned methods, because orthogonalization costs dominate when a large number of iterations have to be performed. The fastest preconditioned method arises for $d=64$,  and it is by a factor of twelve faster than the unpreconditioned method when using 64 nodes, and by a factor of 19 on 256 nodes. In the additional \Cref{fig:overlap_qcd_high_tol}, we show convergence plots for the best three preconditioned methods from \Cref{tab:overlapQCD}, now going down to a relative error of $10^{-9}$, a precision which is very difficult to attain for the unpreconditioned algorithm due to tremendous time and memory requirements. For this higher accuracy, $d=32$ and $d=64$ take almost the same time, while  $d=16$ takes about $10\%$ more time since the orthogonalization cost, which grows quadratically in the number of iterations, starts to prevail. The case $d=64$ with $m=94$ is 135 times faster than the unpreconditioned one with $m=6\,000$ on 256 nodes.

\begin{figure}
\centering\includegraphics[width=8cm]{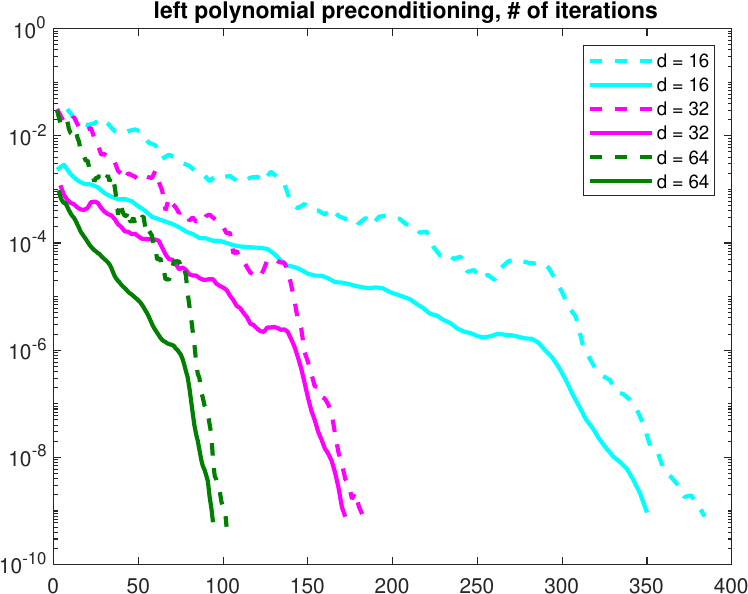}
\caption{Results for approximating $((Q_\mu)^{2})^{-1/2}b$ with Arnoldi preconditioning polynomials of various degrees, $b$ a random vector. Solid lines: relative error $\|f_m-f^*\|/\|f^*\|$, dashed lines: error measure $\|f_m-f_{m+k}\|/\|f_{m+k}\}$ with $k = 64/d$. %
} \label{fig:overlap_qcd_high_tol}
\end{figure}

\subsection{Example~\arabic{subsection}: Square root of graph Laplacian} \label[example]{ex:graph_laplace}
As our last example we study another nonsymmetric problem, which is known to be notorious\-ly difficult to solve both for sketched and for restarted Krylov subspace methods; see, e.g.,~\cite[Section~5.1.3]{cortinovis2022speeding} and~\cite[Section~5.4]{GuettelSchweitzer2023} where a similar (but much smaller) model problem is considered. We let $A$ be the adjacency matrix of a directed graph, $D_{\text{in}}$ the diagonal matrix which contains the in-degrees of all nodes on the diagonal and $L = D_{\text{in}}-A$ the corresponding \emph{in-degree Laplacian}.

The square root $L^{1/2}$---a {\em fractional Laplacian}---is an important tool for modeling non-local dynamics on the network; see, e.g.,~\cite{benzi2020non,metzler2000random}. It is evident from its definition that all column sums of $L$ are zero and it is thus singular. Its spectrum is contained in the closed right half-plane as can be seen via Gershgorin's theorem, while its field of values is not; see~\cite[Theorem~1.6.6]{horn1991topics}).

We are interested in forming $L^{1/2}b$, where $b$ is a randomly chosen canonical unit vector $e_i$. As outlined in \cref{sec:square_root}, we apply our method to approximate $L^{-1/2}(Lb)$, which is possible despite $L$ being singular; cf.~\Cref{thm:semi-simple}. To increase numerical stability, we run the Arnoldi method with reorthogonalization in this example. The graph we took is
\texttt{Kamvar/Stanford} from the Suite\-Sparse matrix collection \cite{suitesparse}, so $L$ has $281\,903$ rows and columns and $2\,594\,228$ nonzero entries.

\begin{figure}
\includegraphics[width=0.48\textwidth]{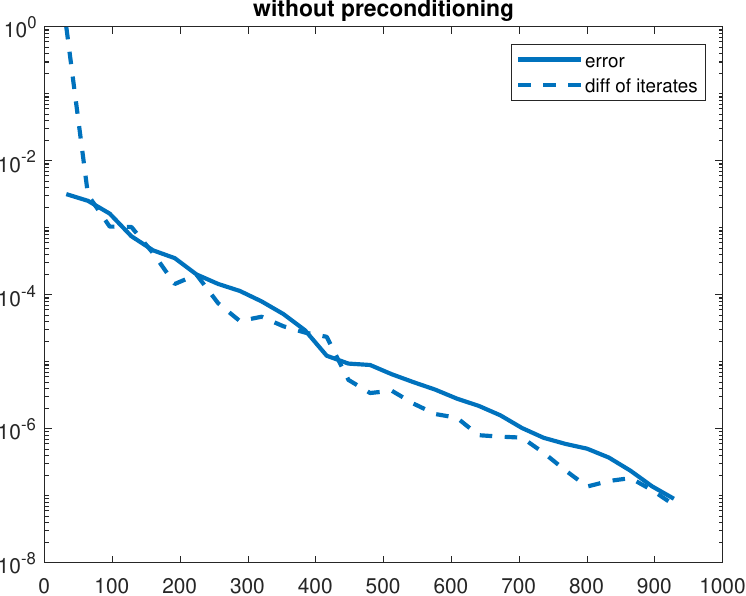}
\hfill
\includegraphics[width=0.48\textwidth]{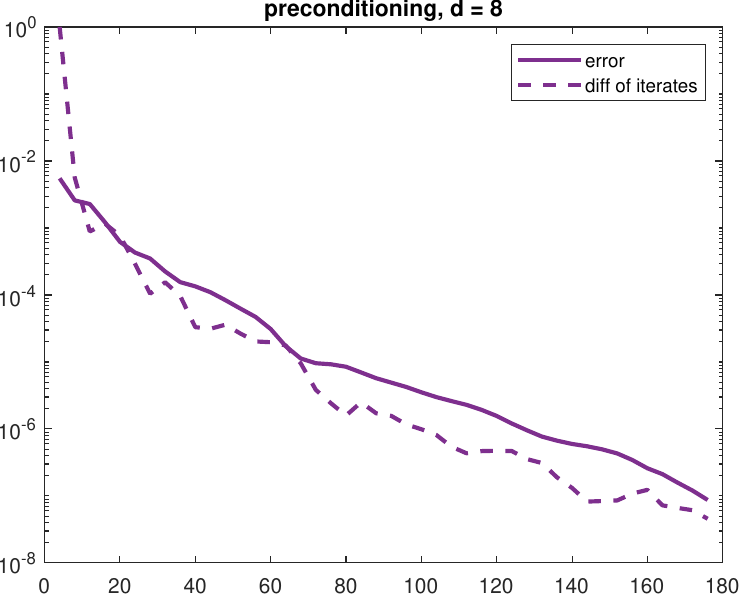}
\\[3ex]
\includegraphics[width=0.48\textwidth]{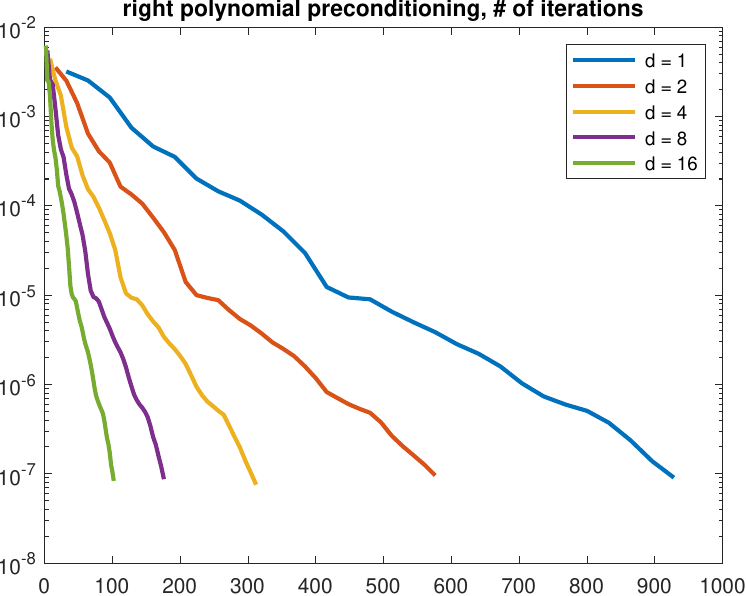}
\hfill
\includegraphics[width=0.48\textwidth]{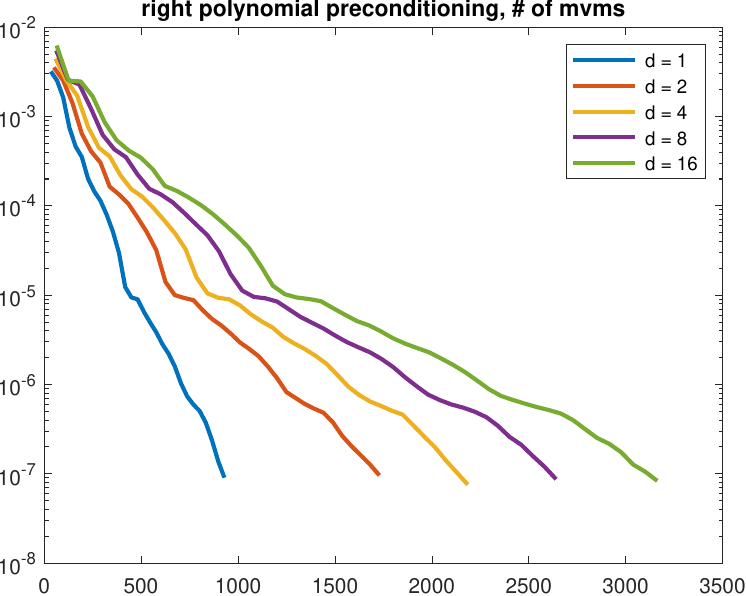}
\caption{Results for approximating $L^{1/2}b$ with Arnoldi preconditioning polynomials of various degrees ($d = 1$ corresponds to an unpreconditioned method), where $L$ is the graph Laplacian of the network \texttt{Kamvar/Stanford} and $b$ is a random vector of unit norm.} \label{fig:graphlaplace}
\end{figure}

\begin{table}
\caption{Timings and operation counts for approximating $L^{1/2}b$ with two choices for the preconditioning polynomials of various degrees $d-1$. Left: polynomials interpolate in Ritz values (see \cref{subsec:ritz}). Right: error minimizing polynomials (see \cref{subsec:ls_polynomials}. $L$ is the graph Laplacian of the network \texttt{Kamvar/Stanford}. The first row corresponds to unpreconditioned Arnoldi.  \label{tab:graphlaplace}}    
\begin{center}
\begin{tabular}{c|cccc|cccc}
\multicolumn{1}{c}{} & \multicolumn{4}{c}{interpolation of Ritz values} &\multicolumn{4}{c}{error minimization} \\ 
$d$ & iters & mvms & inprods & time (in s)& iters & mvms & inprods & time (in s)\\ 
\hline
  1  &   928  &  929    &  860\,256 &  333.0 &  
         928  &  929    &  860\,256 &  333.0\\
  2  &   576  &  1\,732  &  331\,202 &  141.7 & 
         544 &  1\,693 &  298\,932 &  393.9\\  
  4  &   312  &  2\,190  &  97\,044 &  58.1 & 
         272 &  1\,965 &   77\,252 &  125.3  \\  
  8  &   176  &  2\,650  &   30\,856 &  37.7 &  
         144 &  2\,221 &   24\,132 &   67.4 \\  
 16  &   102  &  3\,180  &   10\,542 &  35.1 &
         78 &  2\,479 &    9\,546 &   60.7   
 \end{tabular}
\end{center}
\end{table}

In \Cref{fig:graphlaplace} and the left part of \Cref{tab:graphlaplace} we report results obtained with preconditioning polynomials interpolating at Ritz values (see \cref{subsec:ritz}), this time aiming for a relative error of $10^{-7}$.
Given the large problem size, as in \Cref{ex:poisson} we take as the ``exact'' solution, which we use to compute the errors, the approximation obtained with the unpreconditioned Arnoldi method with a stricter tolerance, $10^{-10}$. The parameter $k$ that controls how frequently we check the stopping criterion is chosen as $k = 32/d$ in this experiment. As in the previous example, the benefits of polynomial preconditioning become very apparent: As orthogonalization cost largely dominates the overall cost of the unpreconditioned method, run time is reduced by a factor of about $9$ when going from unpreconditioned Arnoldi to polynomially preconditioned Arnoldi with $d = 8$ or $d = 16$.
Note that \Cref{tab:graphlaplace} now reports complete timings, including the time spent in evaluating $H_m^{-1/2}b$ (via the function \texttt{sqrtm} of MATLAB followed by a linear system solve). As opposed to \Cref{ex:qcd}, the time spent in these evaluations now represents only a small fraction (from 0.5\% for $d = 16$ to 2.8\% for $d = 1$) of the total compute time. 

The right part of Table~\ref{tab:graphlaplace} shows results obtained with preconditioning polynomials solving the least-squares problem discussed in \cref{subsec:ls_polynomials}. 
For this, we run $60$ Arnoldi iterations and use the MATLAB function \texttt{boundary} on the Ritz values to construct the boundary $\Gamma$. We choose a minimum absolute value of $0.1$ for all of $\Gamma$ and discretize the modified boundary in steps of uniform size $0.005$. In view of \eqref{eq:root_of_square} we checked whether for the discrete points $z$ on the boundary we have $q(z) \in \mathC^+$, which was indeed always the case.
We refrain from showing convergence plots here, as they do not give additional insight beyond what we see in \Cref{fig:graphlaplace} for the Arnoldi polynomial.

 The least-squares polynomials are complex and thus necessitate complex arithmetic even though $L$ and $b$ are real. This is in contrast to the interpolating polynomials which are real because all Ritz values are. As a consequence, with least squares polynomials the run time increases for $d=2$ compared to the unpreconditioned case and remains larger than the run times with the interpolation polynomials for the same values of $d$. However, fewer matrix-vector and inner products are needed, so we expect this approach to be more beneficial in cases where complex arithmetic is needed anyways.

To conclude this example, we compare to the performance of competing methods that can be used to reduce storage and orthogonalization cost when approximating functions of nonsymmetric matrices. Specifically, we compare with the sketched FOM method from~\cite{GuettelSchweitzer2023}\footnote{available at~\url{https://github.com/MarcelSchweitzer/sketched_fAb}.} as well as with quadrature based restarting~\cite{FrommerGuettelSchweitzer2014a},\footnote{available at \url{https://github.com/guettel/funm_quad}.} both with and without implicit deflation~\cite{EiermannErnstGuettel2011}. The results are depicted in \Cref{fig:graphlaplace_competitors}. 

For restarted Arnoldi, we vary the restart lengths from $20$ to $200$ and allow at most $3000$ mvms overall. Without implicit deflation, the method only manages to reach the desired accuracy of $10^{-7}$ for the largest restart length $k = 200$, within nine restart cycles. This requires $1\,800$ mvms as well as $179\,100$ inner products and takes $91.8$ seconds. 

When using implicit deflation, we aim to deflate $k/10$ approximate eigenvectors (which is typically a suitable value). This speeds up convergence, particularly for $k = 50$ and $k = 100$. The overall most efficient method is then for $k = 100$, taking $60.9$ seconds.

\begin{figure}
\centering 
\includegraphics[width=0.48\textwidth]{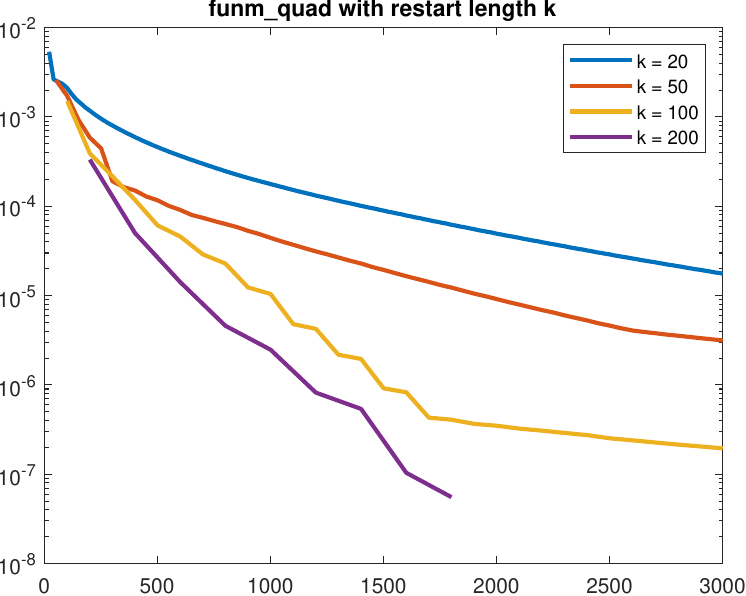}
\hfill
\includegraphics[width=0.48\textwidth]{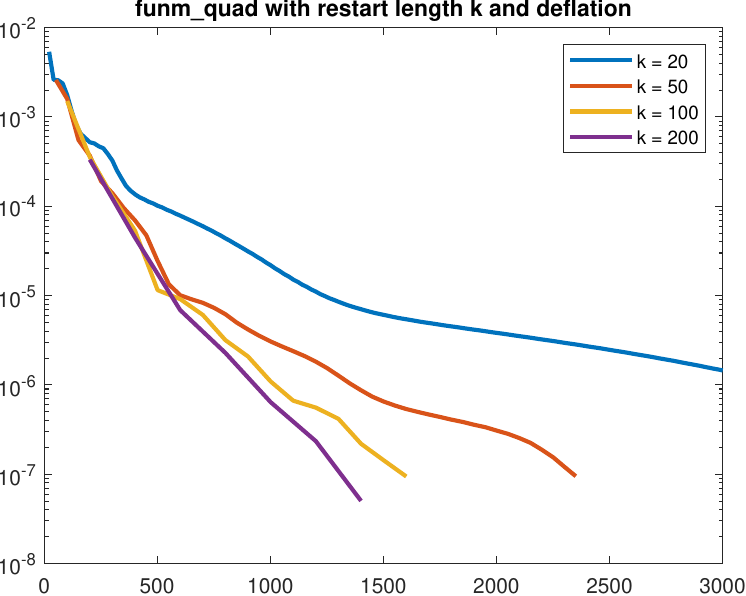}
\\[3ex]
\includegraphics[width=0.48\textwidth]{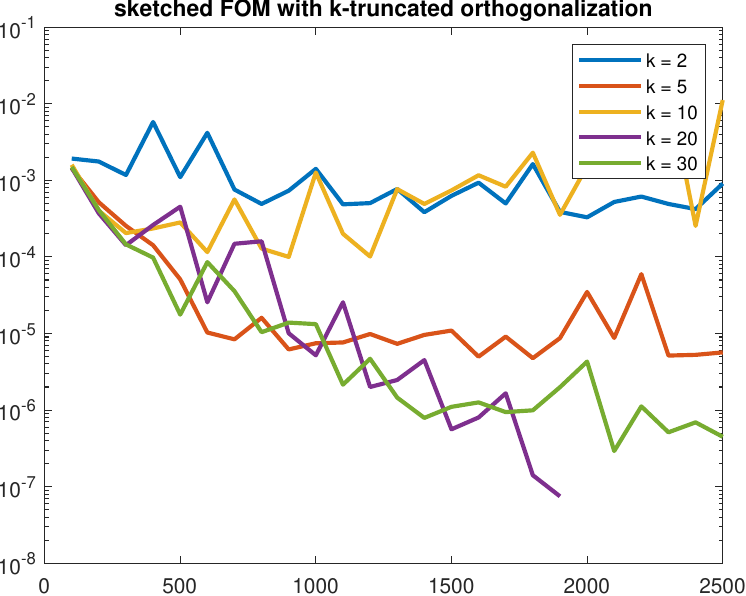}
\caption{Results for approximating $L^{1/2}b$ by sketched-and-truncated or restarted Krylov methods with various truncation/restart lengths $k$, where $L$ is the graph Laplacian of the network \texttt{Kamvar/Stanford} and $b$ is a random vector of unit norm.} \label{fig:graphlaplace_competitors}
\end{figure}

\begin{table}
\caption{Timings and operation counts for approximating $L^{1/2}b$ by sketched-and-truncated or restarted Krylov methods, where $L$ is the graph Laplacian of the network \texttt{Kamvar/Stanford} and $b$ is a random vector of unit norm. For each class of methods, only the value of $k$ leading to the most efficient outcome is reported in this table. \label{tab:graphlaplace_competitors}}   
\begin{center}
 \begin{tabular}{l|cccc}
method & trunc./rest.\ length $k$ & mvms & inner products & time (in s)  \\ \hline
sketched FOM  & 20 & 1\,900 & 37\,790 & 79.9 \\
restarted Arnoldi  & 200 & 1\,800 & 179\,100 & 91.8 \\
deflated restarted Arnoldi & 100 & 1\,400 & 69\,300 & 60.9 
 \end{tabular}
\end{center} 
\end{table}

In sketched FOM, the Krylov basis is computed by a $k$-truncated orthogonalization, i.e., each new basis vector is only orthogonalized against the $k$ preceding basis vectors (so that $k = 2$ mimics the Lanczos process). We try different truncation parameters ranging from $2$ to $30$. Several observations can be made: For the smaller truncation lengths, the method stagnates or diverges long before reaching the target accuracy. In general, convergence is very irregular and erratic, which also means that error estimates and stopping criteria can become unreliable. Additionally, the practical choice of a good truncation parameter appears to be non-trivial, as there seems to be no systematic dependence of performance on the truncation parameter (e.g., $k=5$ performs better than $k=10$ and $k=20$ performs better than $k=30$). The method performs best for $k = 20$ and manages to reach the desired accuracy within $1\,900$ iterations, requiring $79.9$ seconds.

Overall, preconditioned Arnoldi with $d=8$ or $d=16$ outperforms the most efficient other method by a factor of roughly 2 in wall clock time.

\section{Concluding remarks}\label{sec:conclusions}
We presented a way to use polynomial preconditioning in the Arnoldi method for computing  the action of the inverse square root of a matrix on a vector, and by expansion, also for the square root itself. Due to the reduction of the number of Arnoldi steps, the polynomially preconditioned method saves on orthogonalization and storage cost, and these savings can be very substantial, particularly in the non-Hermitian case. Our numerical examples show that polynomial preconditioning may outperform other approaches which aim at avoiding long recurrences, too. We also discussed why it is important to take into account that the square root has two branches.

\paragraph{Acknowledgment}
The authors gratefully acknowledge the Gauss Centre for Supercomputing e.\,V.\ (\url{www.gauss-centre.eu}) for funding this project by providing computing time through the John von Neumann Institute for Computing (NIC) on the GCS Supercomputer JUWELS at Jülich Supercomputing Centre (JSC), under the project with id MUL-TRA. This work is partially supported by the German Research Foundation (DFG) research unit FOR5269 “Future methods for studying confined gluons in QCD”.

\bibliographystyle{siam}
\bibliography{lit}

\end{document}